\RequirePackage{fix-cm}
\documentclass[smallextended]{svjour3}       
\smartqed  
\usepackage{graphicx}
\usepackage{mathptmx}      
\usepackage{amsmath}
\usepackage{amssymb}
\usepackage{nicefrac}
\usepackage{caption}
\usepackage{subcaption}

\newtheorem{ass}{Assumption}%

\usepackage{xcolor}
\usepackage{hyperref}
\makeatletter
\def\cl@chapter{\@elt {chapter}}
\makeatother
\usepackage[noabbrev,capitalize,nameinlink]{cleveref}

\crefformat{equation}{(#2#1#3)}
\crefrangeformat{equation}{(#3#1#4) to~(#5#2#6)}
\crefmultiformat{equation}{(#2#1#3)}{ and~(#2#1#3)}{, (#2#1#3)}{ and~(#2#1#3)}
\crefname{ass}{Assumption}{Assumptions}
\crefname{example}{Example}{Examples}
\newcommand{\sfrac}[2]{\mbox{\footnotesize$\displaystyle\frac{#1}{#2}$}}

\journalname{}

\begin{document}

\title{Convergence analysis of Schwarz-like methods for degenerate elliptic-parabolic equations
\thanks{
	The work of the first author was supported by the Swedish Research Council, grants 2023--03930 and 2023--04862.
	The work of the second author was supported by the Swedish Research Council, grant  2023--04862.}
}
\titlerunning{Schwarz-like methods for degenerate elliptic-parabolic equations}

\author{Monika Eisenmann        \and
             Eskil Hansen%
}

\institute{
  Monika Eisenmann \at
  Centre for Mathematical Sciences, Lund University, P.O.\ Box 118, 221 00 Lund, Sweden \\
  \email{monika.eisenmann@math.lth.se}
  \and
  Eskil Hansen \at
  Centre for Mathematical Sciences, Lund University, P.O.\ Box 118, 221 00 Lund, Sweden \\
  \email{eskil.hansen@math.lth.se}
}
\date{\today}
\maketitle

\begin{abstract}

Convergence is proven for Schwarz-like methods applied to degenerate elliptic-parabolic equations with a $p$-structure. This family of PDEs, e.g., arises when modelling nonlinear diffusion processes. The Schwarz-like approximation methods are based on decomposing the space-time domain into overlapping subdomains, which enables parallel implementations. The methods are derived by introducing a pseudo-time component and applying time integrators of splitting type, which are time stepped towards infinity. This approach of decomposing the space-time domain is related to Schwarz waveform relaxation methods, but the methods considered here have the advantage that they can be proven to converge when applied to nonlinear parabolic, or even degenerate elliptic-parabolic, PDEs. We prove convergence by deriving a nonlinear framework based on the abstract theory for monotone operators and the existence theory for degenerate elliptic-parabolic equations. 

\keywords{Convergence analysis \and degenerate elliptic-parabolic equations \and  Schwarz methods \and overlapping domain decompositions \and splitting time integrators}
\subclass{65M55 \and 65M12 \and 35K65  \and 65J08}
\end{abstract}

\section{Introduction}\label{sec:intro}

Consider the degenerate elliptic-parabolic problem
\begin{equation}\label{eq:parabolic}
\left\{
\begin{aligned}
\partial_t (\gamma u) - \nabla \cdot \alpha(t,\nabla u) +\beta(t,u) +f(t)&= 0 & &\text{in } \Omega\times (0,T),\\
\alpha(t,\nabla u) \cdot \mathbf{n} &=0 & &\text{on } \partial \Omega\times (0,T),\\
\gamma u(0) &=0  & &\text{in } \Omega,
\end{aligned}
\right.
\end{equation}
where $\Omega\in\mathbb{R}^d$ is a bounded Lipschitz domain with the boundary~$\partial\Omega$, and $\mathbf{n}$ denotes the unit outward normal vector of $\partial\Omega$. Here, $\gamma$ is a nonnegative function and $\alpha(t,\nabla u)$ may vanish for nonzero values of $\nabla u$.
That is, for subsets of $\Omega$, where $\gamma(x)=0$ the equation switches from parabolic to elliptic. Furthermore, in contrast to linear parabolic problems, the degeneracy feature that $\alpha$ may be zero for nonzero arguments describes a diffusion process with finite speed of propagation. A survey of applications involving degenerate elliptic-parabolic equations can be found in~\cite[Chapters~8 and~12]{roubicek}. We consider a homogeneous initial condition for the sake of simplicity. For the non-homogeneous case, see~\cite[Chapter~3]{Show} for details. 

As these equations are both nonlinear and require implicit time discretizations, they are numerically challenging and require large-scale computations. A common approach to facilitate such computations is to employ domain decomposition methods, which enable parallel implementations. In the context of elliptic equations, a domain decomposition method consists of first decomposing the equation’s domain into subdomains. Thereafter, the method solves the elliptic equation on each subdomain and communicates the results via the overlaps, or boundaries, of the adjacent subdomains. The classical application of these methods to parabolic equations is to first apply a time integrator and thereafter apply domain decomposition to the elliptic problems arising at each time step. For a general introduction, we refer to \cite{quarteroni,Widlund}. Domain decomposition methods can also be directly incorporated into time integrators by interpreting the decomposition procedure as a splitting scheme, see, e.g.,~\cite{Eisenmann,henningsson,vabi}.

Combining domain decomposition with time integration allows for parallelization in space, but inherently prevents parallel implementations in time. To remedy this, one can consider Schwarz waveform relaxation (SWR) methods, where the full space-time domain is decomposed. These space-time decomposition methods have been proposed in the contexts of parallel time integrators; surveyed in~\cite{gander25}, space-time finite elements; surveyed in~\cite{steinbach19}, and parabolic problems with a spatial domain given by a union of domains with very different material properties~\cite{japhet20}. There are several studies concerning the convergence and other theoretical aspects of space-time decomposition methods applied to non-degenerate parabolic equations, see, e.g.,~\cite{engstrom24,gander07,kwok21,halp12}.

However, results for SWR methods are lacking from the literature when considering more general degenerate parabolic equations. This absence of numerical results is likely explained as follows. The standard convergence analysis for SWR methods applied to linear equations~ \cite{quarteroni} relies on interpreting the method as a projection procedure, which is not applicable for nonlinear equations. For degenerate elliptic equations, i.e., \cref{eq:parabolic} with $\gamma=0$, one can derive convergence analyses by relying on the coercivity of the degenerate elliptic operator~\cite{engstrom22,tai02}. This property is lost for degenerate parabolic equations, as the operator $u\mapsto\partial_t(\gamma u)$ is just  monotone and not necessarily coercive. Hence, if one is to introduce approximation schemes similar to the SWR methods for degenerate parabolic equations, or even degenerate elliptic-parabolic equations, then a more general framework is needed.

The goal of our study is therefore to introduce a new family of domain decomposition schemes that have similar properties to the SWR methods and prove their convergence when applied to degenerate elliptic-parabolic equations. To achieve this, we will derive the new schemes in~\cref{sec:scheme} and cast them into an abstract Cauchy framework based on monotone operators in~\cref{sec:abs}. The convergence can then be proven in the abstract elliptic framework of~\cite{LionsMercier,Temam}, as done in~\cref{sec:conv}. We derive in~\cref{sec:par} how degenerate elliptic-parabolic equations with a $p$-structure can be incorporated into our abstract Cauchy framework. The analysis presented here is restricted to the continuous case, and the design of a parallel space-time finite element method based on the new Schwarz-like methods will be considered elsewhere. 

\section{Constructing Schwarz-like schemes}\label{sec:scheme}

We will derive a family of approximation schemes that are Schwarz-like in the sense that they decompose the space-time domain $\Omega\times (0,T)$ into overlapping subdomains. To this end, consider a collection of subsets $\{\Omega_\ell\}_{\ell=1}^q$ that satisfies $\cup_{\ell=1}^q\Omega_\ell=\Omega$. Each subset $\Omega_\ell$ is either a Lipschitz domain, or a union of pairwise disjoint Lipschitz domains $\{\Omega_{\ell,j}\}_{j=1}^r$ such that $\cup_{j=1}^r\Omega_{\ell,j}=\Omega_\ell$. Over the subdomain $\{\Omega_\ell\}_{\ell=1}^q$ we introduce a partition of unity $\{\chi_\ell\}_{\ell=1}^q$, where the weights satisfy
\begin{equation*}
\chi_\ell(x)>0\text{ a.e.\  }x\in\Omega_\ell,\quad
\chi_\ell(x)=0\text{ a.e.\  }x\in\Omega\setminus\Omega_\ell,\quad
\text{and}\quad\sum_{\ell=1}^q\chi_\ell=1.
\end{equation*}
Given four partition of unities $(\chi_\ell^\alpha,\chi_\ell^\beta,\chi_\ell^\gamma, \chi_\ell^f)$ over $\{\Omega_\ell\}_{\ell=1}^q$, we can formally decompose our degenerate elliptic-parabolic equation~\cref{eq:parabolic} as
\begin{equation}\label{eq:art}
\begin{aligned}
\partial_t (\gamma u) - \nabla &\cdot \alpha(t,\nabla u) +\beta(t,u) +f(t)\\
&=\sum_{\ell=1}^q \partial_t (\chi_\ell^\gamma\gamma u) - \nabla \cdot \chi_\ell^\alpha\alpha(t,\nabla u) + \chi_\ell^\beta \beta(t,u) +\chi_\ell^ff(t)\\
&=\sum_{\ell=1}^q \mathcal{F}_\ell u =0.
\end{aligned}
\end{equation}
Here, each term $\mathcal{F}_\ell u$ is a function with support in the space-time domain $\Omega_\ell\times (0,T)$. With the formulation~\cref{eq:art}, we can derive approximation schemes in the same way as done in~\cite{Temam} for elliptic equations. That is, we introduce a pseudo-time $\xi\in [0,\infty)$ and the corresponding evolution equation
\begin{equation}\label{eq:art2}
\frac{\mathrm{d}}{\mathrm{d} \xi} v+\sum_{\ell=1}^q\mathcal{F}_\ell v=0,\quad v(0)\text{ given}.
\end{equation}
If the maps $\mathcal{F}_\ell$ fulfill some form of monotonicity property, we obtain that $v(\xi)\to u$ as $\xi\to\infty$. Hence, we can approximate $u$ by applying a standard time integrator of splitting type to~\cref{eq:art2} and then time step towards infinity.

For the special case $q=2$, we can employ the Peaceman--Rachford time integrator in the above approach. This gives an approximation scheme of the form: find $\{u^n_1,u^n_2\}_{n\in\mathbb{N}}$ such that
\begin{equation}\label{eq:PR}
\left\{
\begin{aligned}
(sI+\mathcal{F}_1)u_1^{n+1} &=(sI-\mathcal{F}_2)u_2^{n},\\
(sI+\mathcal{F}_2)u_2^{n+1} &=(sI-\mathcal{F}_1)u_1^{n+1},
\end{aligned}
\right.
\end{equation}
where  $u^0_2$ is an initial guess and $u^n_\ell$ is an approximation of $u$ in $\Omega_\ell\times (0,T)$. Here, the
method parameter $s>0$ can be interpreted as $1/\tau$, with $\tau$ denoting the pseudo-time step. If we apply the same procedure to the Douglas--Rachford method, then the corresponding approximation scheme reads
\begin{equation}\label{eq:DR}
\left\{
\begin{aligned}
(sI+\mathcal{F}_1)u_1^{n+1} &=(sI-\mathcal{F}_2)u_2^{n},\\
(sI+\mathcal{F}_2)u_2^{n+1} &=su_1^{n+1}+\mathcal{F}_2u_2^{n}.
\end{aligned}
\right.
\end{equation}
These methods are suitable in this context as they preserve the equilibrium of the pseudo-time problem.
In the general case $q\geq 2$, we can use the additive splitting method to obtain the approximation scheme
\begin{equation}\label{eq:AS}
\left\{
\begin{aligned}
(sI+\mathcal{F}_{\ell})u_\ell^{n+1} &=su^{n}\quad\text{for }\ell=1,\ldots,q,\\
u^{n+1} &=\sfrac{1}{q}\sum_{\ell=1}^q u_\ell^{n+1},
\end{aligned}
\right.
\end{equation}
where $u^n$ is an approximation of $u$ in $\Omega\times (0,T)$. While this method does not necessarily preserve the equilibrium of the equation, it is straightforward to parallelize the additive splitting method.

One can of course create many  other approximation schemes by choosing other splitting integrators. However, we will focus on~\cref{eq:PR,eq:DR,eq:AS}, as they already have quite different convergence properties and require different tailored convergence proofs. Note that it is of little use to consider higher-order splitting methods, as degenerate elliptic-parabolic equations typically have solutions of low regularity.
\section{Preliminaries}\label{sec:prel}

In the following analysis, for a real Banach space $V$, we denote its dual space by $V^*$. When inserting an element $v \in V$ into $u\in V^*$, we use the notation for the dual pairing $u(v) = \langle u, v \rangle_{V^*\times V}$. In the case that there exists a Hilbert space $H$ such that $V$ is densely embedded into $H$ then we have a Gelfand triplet setting $V \hookrightarrow H \hookrightarrow V^*$ and $ \langle u, v \rangle_{V^*\times V}=(u,v)_H$ for $u\in H, v\in  V$. 

Let $G\colon V\to V^*$ be a possibly nonlinear operator. We call $G$ \emph{bounded} if it maps bounded sets in $V$ into bounded sets in $V^*$. If there exists a function $k \colon V \to [0,\infty)$ such that 
\begin{equation*}
		\langle G u - G v, u - v \rangle_{V^* \times V}
		\geq k (u - v)  \quad \text{for all }u,v \in V,
\end{equation*}
then the operator $G$ is referred to as \emph{$k$-monotone}. If $k \equiv0$, then $G$ is just \emph{monotone}. The operator $G$ is \emph{hemicontinuous} if $\varepsilon \mapsto \langle G( u + \varepsilon v), w \rangle_{V^*\times V}$ is continuous for $\varepsilon\in[0,1]$ and all $u,v,w \in V$. The operator $G$ is \emph{coercive} if $\langle Gu , u \rangle_{V^* \times V}\to\infty$ as $\|u\|_V\to\infty$. We call $G$ \emph{symmetric} if $\langle G u, v \rangle_{V^* \times V}= \langle G v, u \rangle_{V^* \times V}$ for all $u,v \in V$.

Additionally, for some statements, we consider $G$ as a possibly unbounded operator on a Hilbert space $H$. In this setting, an operator $G \colon D(G)\subseteq H\to H$, is called \emph{accretive} if
\begin{equation*}
	(G u - G v, u - v)_H \geq 0 \quad \text{for all } u, v \in D(G).
\end{equation*}
Moreover, it is called \emph{maximal} if $R\bigl(s I+G)=\mathcal{H}$ for all $s>0$.

Throughout the paper, $c$ and $C$ will denote generic positive constants.

\section{Abstract Cauchy framework}\label{sec:abs}

Let $H$ be a Hilbert space and $V$ be a separable, reflexive Banach space such that $V$ is densely embedded into $H$. For a given $p\in [2,\infty)$  consider the induced spaces
\begin{equation*}
\mathcal{H}=L^2(0,T; H)\quad\text{and}\quad\mathcal{V}=L^p(0,T; V).
\end{equation*}
By~\cite[Chapter~II.2]{Kufner}, we then have the identifications $\mathcal{V}^*\cong L^{p/(p-1)}(0,T; V^*)$ and
\begin{equation*}
\langle u,v\rangle_{\mathcal{V}^*\times \mathcal{V}}=\int_0^T\langle u(t),v(t)\rangle_{V^*\times V}\,\mathrm{d}t,
\end{equation*}
as well as the Gelfand triplet $\mathcal{V}\hookrightarrow\mathcal{H}\hookrightarrow\mathcal{V}^*$. Furthermore, consider a family of operators $\{A(t)\}_{t\in(0,T)}$, where $A(t)\colon V\to V^*$ is not necessarily linear, and the single bounded linear operator $M \colon H\to H$. The induced operators
$\mathcal{A}\colon \mathcal{V}\to\mathcal{V}^*$ and $\mathcal{M}\colon \mathcal{H}\to\mathcal{H}$ are then given by
\begin{equation*}
(\mathcal{A} u)(t)=A(t)u(t)\quad\text{and}\quad(\mathcal{M} u)(t)=Mu(t)\quad\text{for a.e.\ } t\in (0,T),
\end{equation*}
respectively. The main tool to analyze degenerate elliptic-parabolic equations, is to observe the following. The translation semigroup $\{S(\tau)\}$ on~$\mathcal{V}^*$ defined as
\begin{equation}\label{eq:sg}
S(\tau)u(t)=
\begin{cases}
	0&\text{ for }0\leq t\leq \tau,\\
	u(t-\tau)&\text{ for }\tau<t\leq T,
\end{cases}
\end{equation}
is generated by $-\mathrm{d}/\mathrm{d} t \colon D(\mathrm{d}/\mathrm{d} t)\subseteq \mathcal{V}^*\to\mathcal{V}^*$, where
\begin{equation*}
\begin{aligned}
D(\mathrm{d}/\mathrm{d} t)&=\{u\in \mathcal{V}^*: \lim_{\tau\to 0^+}\sfrac{1}{\tau}\bigl(I-S(\tau)\bigr)u=\mathrm{d}/\mathrm{d} t u\in\mathcal{V}^*\}\\
&=\{u\in W^{1,p/(p-1)}(0,T; V^*): u(0)=0\}.
\end{aligned}
\end{equation*}
Compare with~\cite[Proposition~5.1]{Show}. Note that $W^{1,p/(p-1)}(0,T; V^*)\hookrightarrow C([0,T];V^*)$, by~\cite[Lemma 7.1]{roubicek}, i.e., the pointwise evaluation in $D(\mathrm{d}/\mathrm{d} t)$ is well defined. We also define the space
\begin{equation*}
\mathcal{W}=\{u\in\mathcal{V}: \mathcal{M}u\in D(\mathrm{d}/\mathrm{d} t)\}.
\end{equation*}

For a given $f\in\mathcal{V}^*$, one can now consider the nonlinear Cauchy problem of finding $u\in \mathcal{W}$ such that
\begin{equation}\label{eq:abstract}
\mathcal{F}u=(\mathrm{d}/\mathrm{d} t\mathcal{M}+\mathcal{A})u+f=0\quad\text{in }\mathcal{V}^*,
\end{equation}
or equivalently, finding a solution to
\begin{equation}\label{eq:weak}
-\int_0^T\bigl(Mu(t),\partial_t v(t)\bigr)_H\,\mathrm{d}t+\langle \mathcal{A}u+f,v\rangle_{\mathcal{V}^*\times \mathcal{V}}=0
\end{equation}
for all $v\in W^{1,p}(0,T;V)$ with $v(T)=0$. See~\cite[Chapter~3]{Show} for details on the equivalent formulations of Cauchy problems.
\begin{definition}\label{def:proper}
Consider the spaces $V,H$, which induce $\mathcal{H},\mathcal{V},\mathcal{W}$, together with the operators $M\colon H\to H$, $\mathcal{A}\colon \mathcal{V}\to\mathcal{V}^*$. If
\begin{enumerate}
\item $V$ is a separable, reflexive Banach space that is densely embedded in the Hilbert space $H$;
\item $M$ is linear, bounded, monotone, and symmetric;
\item $\mathcal{A}$ is bounded, $k$-monotone, hemicontinuous, and coercive,
\end{enumerate}
then the problem set $(V,H,M,\mathcal{A})$ is \emph{proper}.
\end{definition}
\begin{lemma}\label{lem:accretive}
If $(V,H,M,\mathcal{A})$ is proper, then
\begin{equation*}
\langle \mathrm{d}/\mathrm{d}t \mathcal{M} u, u\rangle_{\mathcal{V}^*\times\mathcal{V}}\geq 0\quad\text{for all }u\in\mathcal{W}.
\end{equation*}
\end{lemma}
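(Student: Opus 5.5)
Since $M$ is linear, bounded, monotone and symmetric, it has a bounded, monotone, symmetric square root $M^{1/2}$, and $(Mv,v)_H=\|M^{1/2}v\|_H^2$. The plan is to show that for $u\in\mathcal{W}$
\begin{equation*}
\langle \mathrm{d}/\mathrm{d}t\, \mathcal{M} u, u\rangle_{\mathcal{V}^*\times\mathcal{V}}=\tfrac12\|M^{1/2}u(T)\|_H^2\geq 0,
\end{equation*}
using $Mu(0)=0$, which holds because $\mathcal{M}u\in D(\mathrm{d}/\mathrm{d}t)$. Making sense of $M^{1/2}u(T)$ requires knowing that $M^{1/2}u$ has a continuous $H$-valued representative. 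This is exactly the main obstacle, because a priori we only have $\mathcal{M}u\in W^{1,p/(p-1)}(0,T;V^*)$ and $u\in\mathcal{V}$.

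I will avoid the pointwise identity and instead work with the semigroup characterization of $D(\mathrm{d}/\mathrm{d}t)$ from \cref{eq:sg}. For $u\in\mathcal{W}$ we have
\begin{equation*}
\langle \mathrm{d}/\mathrm{d}t\, \mathcal{M}u,u\rangle=\lim_{\tau\to0^+}\tfrac1\tau\langle (I-S(\tau))\mathcal{M}u,u\rangle,
\end{equation*}
because the difference quotients converge in $\mathcal{V}^*$. Since $\mathcal{M}u\in\mathcal{H}$ and $u\in\mathcal{V}\subset\mathcal{H}$, each term on the right is an $\mathcal{H}$ inner product, as given by the Gelfand triplet. Writing $w=M^{1/2}u\in\mathcal{H}$ and using that $M$ is symmetric and acts pointwise in $t$, so that it commutes with $S(\tau)$, we get
\begin{equation*}
\langle (I-S(\tau))\mathcal{M}u,u\rangle=\|w\|_{\mathcal{H}}^2-(S(\tau)w,w)_{\mathcal{H}}.
\end{equation*}

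The key estimate is that $S(\tau)$ is a contraction on $\mathcal{H}$, since $\|S(\tau)w\|_{\mathcal{H}}^2=\int_0^{T-\tau}\|w(t)\|_H^2\,\mathrm{d}t\le\|w\|_{\mathcal{H}}^2$. Cauchy--Schwarz then gives $(S(\tau)w,w)_{\mathcal{H}}\le\|w\|_{\mathcal{H}}^2$. Hence every difference quotient is nonnegative, and so is their limit, which proves the lemma. More precisely, $\|w\|^2-(S(\tau)w,w)=\tfrac12\|w-S(\tau)w\|^2+\tfrac12(\|w\|^2-\|S(\tau)w\|^2)\ge0$.

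The points that need care are the following. First, the difference quotient converges in $\mathcal{V}^*$ while $u\in\mathcal{V}$, so the pairing passes to the limit. Second, on elements of $\mathcal{H}$ the $\mathcal{V}^*$–$\mathcal{V}$ pairing coincides with the $\mathcal{H}$ inner product. Third, the semigroup on $\mathcal{V}^*$ restricts on $\mathcal{H}$ to the same translation, so the commutation $\mathcal{M}S(\tau)=S(\tau)\mathcal{M}$ is immediate from the definitions. With this approach the regularity issue never arises.
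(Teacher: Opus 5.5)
Your proof is correct and follows the paper's argument. Both pass to the limit in the semigroup difference quotient $\frac{1}{\tau}\langle (I-S(\tau))\mathcal{M}u,u\rangle_{\mathcal{V}^*\times\mathcal{V}}$ and show that each quotient is nonnegative, by proving $\langle S(\tau)\mathcal{M}u,u\rangle_{\mathcal{V}^*\times\mathcal{V}}\leq\langle\mathcal{M}u,u\rangle_{\mathcal{V}^*\times\mathcal{V}}$ via Cauchy--Schwarz and the truncation of the time integral. The only difference is cosmetic: you express the semi-inner product $\langle M\cdot,\cdot\rangle$ through the square root $M^{1/2}$ and use that $S(\tau)$ is a contraction on $\mathcal{H}$, whereas the paper applies the Cauchy--Schwarz inequality for $\langle M\cdot,\cdot\rangle$ directly and so never needs the square root.
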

\begin{proof}
Let $u\in\mathcal{W}$ and observe that
\begin{equation}\label{eq:timederivative}
\langle \mathrm{d}/\mathrm{d}t \mathcal{M} u,  u\rangle_{\mathcal{V}^*\times\mathcal{V}}=
\lim_{\tau\to 0^+}\sfrac{1}{\tau}\langle \bigl(I-S(\tau)\bigr)\mathcal{M} u,  u\rangle_{\mathcal{V}^*\times\mathcal{V}}.
\end{equation}
Furthermore, as $M$ is monotone and symmetric, the bilinear form $\langle M\cdot,\cdot\rangle$ satisfies the Cauchy-Schwarz inequality
\begin{equation*}
|\langle Mu,v\rangle_{V^*\times V}|\leq \langle M u,u\rangle^{1/2}_{V^*\times V} \langle Mv,v\rangle^{1/2}_{V^*\times V}
\quad\text{ for all }u,v\in V.
\end{equation*}
This observation implies that for every $u\in\mathcal{W}$, we have
\begin{equation}\label{eq:accretive}
\begin{aligned}
\langle S(\tau)\mathcal{M} u,  &u\rangle_{\mathcal{V}^*\times\mathcal{V}} =\int_\tau^T \langle M u(t-\tau),u(t)\rangle_{V^*\times V}\,\mathrm{d}t\\
&\leq \int_\tau^T \langle M u(t-\tau),u(t-\tau)\rangle^{1/2}_{V^*\times V} \langle M u(t),u(t)\rangle^{1/2}_{V^*\times V}\,\mathrm{d}t\\
&\leq \bigl(\int_0^{T-\tau} \langle M u(t),u(t)\rangle_{V^*\times V}\,\mathrm{d}t\bigr)^{1/2}\bigl(\int_\tau^T \langle M u(t),u(t)\rangle_{V^*\times V}\,\mathrm{d}t\bigr)^{1/2}\\
&\leq \langle \mathcal{M} u,  u\rangle_{\mathcal{V}^*\times\mathcal{V}}.
\end{aligned}
\end{equation}
Combining this with~\cref{eq:timederivative} gives that sought after bound.\qed
\end{proof}
As $-\mathrm{d}/\mathrm{d}t$ generates a contraction semigroup on $\mathcal{V}^*$ satisfying~\cref{eq:accretive}, compare \cite[Chapter~III.5]{Show}, one has for every proper set $(V,H,M,\mathcal{A})$, every $f \in \mathcal{V}^*$ in~\eqref{eq:abstract}, and every $g\in \mathcal{V}^*$ that the equation
\begin{equation}\label{eq:sol}
\mathcal{F}u=g\quad\text{in }\mathcal{V}^*
\end{equation}
has a unique solution $u\in \mathcal{W}$. This is one of the main results from the existence theory for degenerate elliptic-parabolic equations, which is due to~\cite{Bardos}. For an English version see~\cite[Proposition~III.6.2]{Show}. Next, consider the domain
\begin{equation*}
D(\mathcal{F})=\{u\in \mathcal{W}: \mathcal{F} u\in\mathcal{H}\}.
\end{equation*}
together with the restricted operator $\mathcal{F}\colon D(\mathcal{F})\subseteq\mathcal{H}\to\mathcal{H}$, which may be unbounded in $\mathcal{H}$.
For any $s>0$, the problem set $(V,H,M,sI+\mathcal{A})$ is also proper. The observation $\mathcal{H}\subset \mathcal{V}^*$ together with the existence result~\cref{eq:sol} then implies that the restriction of $\mathcal{F}$ is maximal. Furthermore, by \cref{lem:accretive} and the $k$-monotonicity of~$\mathcal{A}$, the restriction of $\mathcal{F}$ is accretive. As a direct consequence, the resolvent
\begin{equation*}
(sI+\mathcal{F})^{-1}\colon \mathcal{H}\to D(\mathcal{F})\subseteq\mathcal{H},
\end{equation*}
is a well-defined, nonexpansive operator for every $s>0$. In order to decompose \cref{eq:abstract}, we assume the following.
\begin{ass}\label{ass:abstract1}
The sets $(V,H,M,\mathcal{A})$ and $(V_\ell,H_\ell,M_\ell,\mathcal{A}_\ell)$, for $\ell=1,\ldots,q$, and the linear, bounded operators $E_{\ell} \colon H_{\ell} \to H$, $R_{\ell} \colon H \to H_{\ell} $ fulfill
\begin{enumerate}
\item  all problem sets are proper;
\item $R_{\ell}E_{\ell}=I$ on $H_\ell$ and $(E_{\ell} u, v)_{H} = (u, R_{\ell} v)_{H_{\ell}}$ for all $u\in H_{\ell},v \in H$;\
\item $Mu=\sum_{\ell=1}^qE_{\ell}M_\ell R_{\ell}u$ for all $u\in H$.
\end{enumerate}
\end{ass}
The induced operators $\mathcal{E_\ell}\colon\mathcal{H}_\ell\to \mathcal{H} $ and $\mathcal{R_\ell}\colon\mathcal{H}\to\mathcal{H}_\ell$, given by
\begin{equation*}
\quad(\mathcal{E}_\ell u)(t)=E_\ell u(t)\quad\text{and}\quad(\mathcal{R}_\ell u)(t)=R_\ell u(t)\quad\text{for a.e.\ } t\in (0,T),
\end{equation*}
respectively, are then linear and bounded. The second and third statements of~\cref{ass:abstract1} also hold for the pairs $\mathcal{E}_\ell,\mathcal{R}_\ell$ in combination with $\mathcal{M}$ and $\mathcal{M}_{\ell}$.

For given functionals $f_\ell\in\mathcal{V}_\ell^*$, $\ell=1,\ldots,q$, we can define the domains
\begin{equation*}
D(\mathcal{F}_\ell)=\{u\in\mathcal{H}: \mathcal{R}_\ell u\in\mathcal{W}_\ell\text{ and } (\mathrm{d}/\mathrm{d} t\mathcal{M}_{\ell}+ \mathcal{A}_{\ell})\mathcal{R}_\ell u+ f_{\ell}\in\mathcal{H}_\ell\}
\end{equation*}
together with the operators $\mathcal{F}_{\ell}\colon D(\mathcal{F}_\ell)\subseteq\mathcal{H}\to \mathcal{H}$, given by
\begin{equation*}
\mathcal{F}_{\ell}u = \mathcal{E}_\ell \bigl((\mathrm{d}/\mathrm{d} t\mathcal{M}_{\ell}+ \mathcal{A}_{\ell})\mathcal{R}_\ell u+ f_{\ell}\bigr) \quad\text{for } u \in D(\mathcal{F}_\ell).
\end{equation*}
\begin{lemma}
If \cref{ass:abstract1} holds then the operator $\mathcal{F}_{\ell}$ is maximal accretive on $\mathcal{H}$.
\end{lemma}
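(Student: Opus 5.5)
Accretivity and maximality follow by reducing to the local operator
\[
\mathcal{G}_\ell v=(\mathrm{d}/\mathrm{d} t\mathcal{M}_{\ell}+ \mathcal{A}_{\ell})v+ f_{\ell},\qquad v\in D(\mathcal{G}_\ell)=\{v\in\mathcal{W}_\ell:\mathcal{G}_\ell v\in\mathcal{H}_\ell\},
\]
which by the discussion preceding \cref{ass:abstract1}, applied to the proper set $(V_\ell,H_\ell,M_\ell,\mathcal{A}_\ell)$ and $f_\ell$, is maximal accretive on $\mathcal{H}_\ell$. The only tools needed are the adjoint relation $(\mathcal{E}_\ell u,v)_{\mathcal{H}}=(u,\mathcal{R}_\ell v)_{\mathcal{H}_\ell}$ and $\mathcal{R}_\ell\mathcal{E}_\ell=I$. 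Note that $\mathcal{F}_\ell=\mathcal{E}_\ell\mathcal{G}_\ell\mathcal{R}_\ell$ and $D(\mathcal{F}_\ell)=\{u\in\mathcal{H}:\mathcal{R}_\ell u\in D(\mathcal{G}_\ell)\}$.

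\emph{Accretivity.} For $u,v\in D(\mathcal{F}_\ell)$ the adjoint relation gives
\[
(\mathcal{F}_\ell u-\mathcal{F}_\ell v,u-v)_{\mathcal{H}}=(\mathcal{G}_\ell\mathcal{R}_\ell u-\mathcal{G}_\ell\mathcal{R}_\ell v,\mathcal{R}_\ell u-\mathcal{R}_\ell v)_{\mathcal{H}_\ell}\ge 0.
\]
The inequality holds because $\mathcal{G}_\ell$ is accretive, which follows from \cref{lem:accretive} together with the $k$-monotonicity of $\mathcal{A}_\ell$.

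\emph{Maximality.} Fix $s>0$ and $g\in\mathcal{H}$; we seek $u\in D(\mathcal{F}_\ell)$ with $su+\mathcal{E}_\ell\mathcal{G}_\ell\mathcal{R}_\ell u=g$.
\begin{enumerate}
\item Apply $\mathcal{R}_\ell$ and use $\mathcal{R}_\ell\mathcal{E}_\ell=I$: $w=\mathcal{R}_\ell u$ must solve $sw+\mathcal{G}_\ell w=\mathcal{R}_\ell g$. By maximality of $\mathcal{G}_\ell$ on $\mathcal{H}_\ell$ there is a solution $w\in D(\mathcal{G}_\ell)$.
\item Define $u=\tfrac1s\bigl(g-\mathcal{E}_\ell\mathcal{G}_\ell w\bigr)\in\mathcal{H}$. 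This is in $\mathcal{H}$ because $\mathcal{G}_\ell w\in\mathcal{H}_\ell$ and $\mathcal{E}_\ell$ is bounded.
\item Check consistency: $\mathcal{R}_\ell u=\tfrac1s(\mathcal{R}_\ell g-\mathcal{G}_\ell w)=w$. Hence $u\in D(\mathcal{F}_\ell)$, and $su+\mathcal{F}_\ell u=g$ by construction.
\end{enumerate}

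The main obstacle is the consistency check in step 3, which is where the identity $\mathcal{R}_\ell\mathcal{E}_\ell=I$ is essential. We also need that the time-pointwise identities of \cref{ass:abstract1} lift to the Bochner spaces, as stated after that assumption. Beyond this, the argument only requires that the maximal accretivity of $\mathcal{F}$ established above for proper sets transfers verbatim to $\mathcal{G}_\ell$.
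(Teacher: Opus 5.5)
Your proof is correct and follows the paper's argument: accretivity comes from the adjoint relation $(\mathcal{E}_\ell u,v)_{\mathcal{H}}=(u,\mathcal{R}_\ell v)_{\mathcal{H}_\ell}$, and maximality comes from solving the local resolvent problem on $\mathcal{H}_\ell$ and lifting the solution to $\mathcal{H}$. Your element $u=s^{-1}(g-\mathcal{E}_\ell\mathcal{G}_\ell w)$ is exactly the paper's $u=\mathcal{E}_\ell u_\ell+s^{-1}(I-\mathcal{E}_\ell\mathcal{R}_\ell)g$, as one sees by substituting $\mathcal{G}_\ell w=\mathcal{R}_\ell g-sw$.
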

\begin{proof}
By assumption, the set $(\mathcal{V}_\ell,\mathcal{H}_\ell,\mathcal{M}_\ell, sI+\mathcal{A}_\ell)$ is proper and the operator
\begin{equation}\label{eq:intermediatop}
u_\ell \mapsto (\mathrm{d}/\mathrm{d} t\mathcal{M}_{\ell}+\mathcal{A}_{\ell})u_\ell+ f_{\ell}
\end{equation}
is maximal accretive on $\mathcal{H}_\ell$ with domain $\{u_\ell\in \mathcal{W}_\ell: (\mathrm{d}/\mathrm{d} t\mathcal{M}_{\ell}+ \mathcal{A}_{\ell})u_\ell+ f_{\ell}\in\mathcal{H}_\ell\}$.

Hence, for each $g\in \mathcal{H}$ there exists a unique $u_\ell$ in the domain such that
\begin{equation*}
 (sI+\mathrm{d}/\mathrm{d} t\mathcal{M}_{\ell}+ \mathcal{A}_{\ell})u_\ell+ f_{\ell}=\mathcal{R}_\ell g.
\end{equation*}
The function $u=\mathcal{E}_\ell u_\ell+s^{-1}(I-\mathcal{E}_\ell \mathcal{R}_\ell)g$ is then in $D(\mathcal{F}_\ell)$, as $\mathcal{R}_\ell u=u_\ell$, and
\begin{equation*}
\begin{aligned}
(sI+\mathcal{F}_{\ell})u&=s\mathcal{E}_\ell u_\ell+(I-\mathcal{E}_\ell \mathcal{R}_\ell)g\\
&\quad+\mathcal{E}_\ell\Bigl((\mathrm{d}/\mathrm{d} t\mathcal{M}_{\ell}+\mathcal{A}_{\ell})\bigr(\mathcal{R}_\ell \mathcal{E}_\ell u_\ell+s^{-1}\mathcal{R}_\ell(I-\mathcal{E}_\ell \mathcal{R}_\ell)g\bigl)+ f_{\ell}\Bigr)\\
&=\mathcal{E}_\ell\bigl((sI+\mathrm{d}/\mathrm{d} t\mathcal{M}_{\ell}+\mathcal{A}_{\ell})u_\ell+f_\ell\bigr)+(I-\mathcal{E}_\ell \mathcal{R}_\ell)g=g.
\end{aligned}
\end{equation*}
That is, $\mathcal{F}_{\ell}$ is maximal.

Let $u,v\in D(\mathcal{F}_\ell)$ and set $\mathcal{R}_\ell u=u_\ell, \mathcal{R}_\ell v=v_\ell$. As the operator~\cref{eq:intermediatop} is accretive,
\begin{equation*}
\begin{aligned}
(\mathcal{F}_{\ell}u-\mathcal{F}_{\ell}v,u-v)_{\mathcal{H}} &=
\bigl(\mathcal{E}_\ell (\mathrm{d}/\mathrm{d} t\mathcal{M}_{\ell}+ \mathcal{A}_{\ell})\mathcal{R}_\ell u-\mathcal{E}_\ell (\mathrm{d}/\mathrm{d} t\mathcal{M}_{\ell}+ \mathcal{A}_{\ell})\mathcal{R}_\ell v,u-v\bigr)_{\mathcal{H}}\\
&=\bigl((\mathrm{d}/\mathrm{d} t\mathcal{M}_{\ell}+ \mathcal{A}_{\ell})u_\ell-(\mathrm{d}/\mathrm{d} t\mathcal{M}_{\ell}+ \mathcal{A}_{\ell})v_\ell,u_\ell-v_\ell\bigr)_{\mathcal{H}_\ell}\geq 0.
\end{aligned}
\end{equation*}
Hence, $\mathcal{F}_{\ell}$ is accretive.\qed
\end{proof}
\begin{ass}\label{ass:abstract2}
The spaces $V,V_\ell$ and the operators $R_\ell$, $\ell=1,\ldots,q$, fulfill
\begin{enumerate}
\item $R_\ell$ is a bounded operator from $V$ to $V_\ell$;
\item if $u\in H$ such that $R_\ell u\in V_\ell$ for all $\ell=1,\ldots,q$, then $u\in V$ and the bound $\|u\|^p_V\leq C\sum_{\ell=1}^q\|R_\ell u\|^p_{V_\ell}$ holds.
\end{enumerate}
\end{ass}
It directly follows that~\cref{ass:abstract2} also holds for the induced spaces $\mathcal{V},\mathcal{V}_\ell$ and operators $\mathcal{R}_\ell$.
\begin{corollary}\label{cor:Well}
Let~\cref{ass:abstract1,ass:abstract2} be valid. If $u\in \mathcal{H}$ and $\mathcal{R}_\ell u\in \mathcal{W}_\ell$ for all $\ell=1,\ldots,q$, then $u\in \mathcal{W}$ and
\begin{equation*}
\langle \mathrm{d}/\mathrm{d}t\mathcal{M}u,v\rangle_{\mathcal{V}^*\times\mathcal{V}}
=\sum_{\ell=1}^q \langle \mathrm{d}/\mathrm{d}t\mathcal{M}_\ell\mathcal{R}_\ell u, \mathcal{R}_\ell v\rangle_{\mathcal{V}_\ell^*\times\mathcal{V}_\ell}
\end{equation*}
for all $v\in \mathcal{V}$.
\end{corollary}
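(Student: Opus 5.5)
First, $u\in\mathcal{V}$. Since $\mathcal{R}_\ell u\in\mathcal{W}_\ell\subseteq\mathcal{V}_\ell$ for every $\ell$, \cref{ass:abstract2} (in its induced form for $\mathcal{V},\mathcal{V}_\ell,\mathcal{R}_\ell$) gives $u\in\mathcal{V}$ with $\|u\|_{\mathcal{V}}^p\leq C\sum_\ell\|\mathcal{R}_\ell u\|^p_{\mathcal{V}_\ell}$. It remains to show $\mathcal{M}u\in D(\mathrm{d}/\mathrm{d}t)$ and to identify the derivative.

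Since $\mathcal{R}_\ell$ maps $\mathcal{V}$ boundedly into $\mathcal{V}_\ell$, its adjoint $\mathcal{R}_\ell^*\colon\mathcal{V}_\ell^*\to\mathcal{V}^*$ is bounded. By the duality relation in \cref{ass:abstract1}, $\mathcal{R}_\ell^*$ coincides with $\mathcal{E}_\ell$ on $\mathcal{H}_\ell$: for $w\in\mathcal{H}_\ell$ and $v\in\mathcal{V}$,
\[
\langle\mathcal{R}_\ell^*w,v\rangle_{\mathcal{V}^*\times\mathcal{V}}=(w,\mathcal{R}_\ell v)_{\mathcal{H}_\ell}=(\mathcal{E}_\ell w,v)_{\mathcal{H}}.
\]
We then use the decomposition $\mathcal{M}u=\sum_\ell\mathcal{E}_\ell\mathcal{M}_\ell\mathcal{R}_\ell u=\sum_\ell\mathcal{R}_\ell^*\mathcal{M}_\ell\mathcal{R}_\ell u$ as an identity in $\mathcal{V}^*$. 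Because $\mathcal{R}_\ell^*$ acts pointwise in time, it commutes with the translation semigroup $S(\tau)$ in~\cref{eq:sg}; that is, $S(\tau)\mathcal{R}_\ell^*=\mathcal{R}_\ell^*S_\ell(\tau)$, where $S_\ell$ is the translation semigroup on $\mathcal{V}_\ell^*$. Checking this amounts to unwinding the definition for a.e.\ $t$, with the zero extension on $[0,\tau]$ preserved by linearity. Hence
\[
\tfrac1\tau\bigl(I-S(\tau)\bigr)\mathcal{M}u=\sum_{\ell=1}^q\mathcal{R}_\ell^*\,\tfrac1\tau\bigl(I-S_\ell(\tau)\bigr)\mathcal{M}_\ell\mathcal{R}_\ell u.
\]
Since $\mathcal{M}_\ell\mathcal{R}_\ell u\in D(\mathrm{d}/\mathrm{d}t)$ in $\mathcal{V}_\ell^*$, each summand converges in $\mathcal{V}_\ell^*$, and continuity of $\mathcal{R}_\ell^*$ gives convergence in $\mathcal{V}^*$ to $\sum_\ell\mathcal{R}_\ell^*\,\mathrm{d}/\mathrm{d}t\,\mathcal{M}_\ell\mathcal{R}_\ell u$. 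By the semigroup characterization of $D(\mathrm{d}/\mathrm{d}t)$, we conclude $\mathcal{M}u\in D(\mathrm{d}/\mathrm{d}t)$, so $u\in\mathcal{W}$. Pairing the limit with $v\in\mathcal{V}$ and using the definition of the adjoint gives the stated formula.

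The main obstacle is making this extension step rigorous. We need $\mathcal{R}_\ell^*$ to agree with $\mathcal{E}_\ell$ on the relevant elements, and the commutation with translation must hold in the dual spaces rather than only in $\mathcal{H}$. Both reduce to density of $\mathcal{V}$ in $\mathcal{H}$ together with the Gelfand triplet identification; $\mathcal{M}_\ell\mathcal{R}_\ell u$ lies in $\mathcal{H}_\ell$, so the identification above applies directly. Alternatively, one could verify the claim through the weak formulation~\cref{eq:weak}, testing against $v\in W^{1,p}(0,T;V)$ with $v(T)=0$ and using $\mathcal{R}_\ell v\in W^{1,p}(0,T;V_\ell)$.
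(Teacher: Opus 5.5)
Your proposal is correct and is essentially the paper's argument. The paper likewise gets $u\in\mathcal{V}$ from \cref{ass:abstract2}, uses $\mathcal{M}=\sum_\ell\mathcal{E}_\ell\mathcal{M}_\ell\mathcal{R}_\ell$ together with the adjoint relation to rewrite $\langle \tfrac{1}{\tau}(I-S(\tau))\mathcal{M}u,v\rangle_{\mathcal{V}^*\times\mathcal{V}}$ as $\sum_{\ell}\langle z_\ell(\tau),\mathcal{R}_\ell v\rangle_{\mathcal{V}_\ell^*\times\mathcal{V}_\ell}$ with $z_\ell(\tau)=\tfrac{1}{\tau}(I-S(\tau))\mathcal{M}_\ell\mathcal{R}_\ell u$, and then passes to the limit in $\mathcal{V}^*$ using the boundedness of $\mathcal{R}_\ell\colon\mathcal{V}\to\mathcal{V}_\ell$; your adjoint $\mathcal{R}_\ell^*$ and the commutation $S(\tau)\mathcal{R}_\ell^*=\mathcal{R}_\ell^*S_\ell(\tau)$ just package that supremum-over-$v$ estimate more compactly, and the commutation is only needed on $\mathcal{H}_\ell$, where it is immediate.
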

\begin{proof}
For a $u$ fulfilling the hypothesis we have, by~\cref{ass:abstract2}, that $u\in\mathcal{V}$ and it remains to show that $\mathcal{M}u\in D(\mathrm{d}/\mathrm{d}t)$. To this end, we observe that $\mathcal{R}_\ell u\in \mathcal{W}_\ell$ and the boundedness of $\mathcal{R}_\ell\colon\mathcal{V}\to\mathcal{V}_\ell$ implies
\begin{equation*}
\langle z,\cdot\rangle_{\mathcal{V}^*\times\mathcal{V}}
=\sum_{\ell=1}^q \langle \mathrm{d}/\mathrm{d}t\mathcal{M}_\ell\mathcal{R}_\ell u, \mathcal{R}_\ell (\cdot)\rangle_{\mathcal{V}_\ell^*\times\mathcal{V}_\ell}
\in \mathcal{V}^*.
\end{equation*}
The definition~\cref{eq:sg} of $\{S(\tau)\}$ gives that
\begin{equation*}
\begin{aligned}
\langle \sfrac{1}{\tau}(I-&S(\tau))\mathcal{M}u,v\rangle_{\mathcal{V}^*\times\mathcal{V}}
=\bigl(\sfrac{1}{\tau}(I-S(\tau))\mathcal{M}u,v\bigr)_{\mathcal{H}}\\
&=\sum_{\ell=1}^q \sfrac{1}{\tau}\int_0^T \bigl(E_\ell M_\ell R_{\ell}u(t),v(t)\bigr)_{H}\,\mathrm{d}t
-\sfrac{1}{\tau}\int_\tau^T \bigl(E_\ell M_\ell R_{\ell}u(t-\tau),v(t)\bigr)_{H}\,\mathrm{d}t\\
&=\sum_{\ell=1}^q  \bigl(\sfrac{1}{\tau}(I-S(\tau))\mathcal{M}_\ell \mathcal{R}_{\ell}u,\mathcal{R}_{\ell}v\bigr)_{\mathcal{H}_\ell}=
\sum_{\ell=1}^q  \langle \sfrac{1}{\tau}(I-S(\tau))\mathcal{M}_\ell\mathcal{R}_\ell u, \mathcal{R}_\ell v\rangle_{\mathcal{V}_\ell^*\times\mathcal{V}_\ell}
\end{aligned}
\end{equation*}
for all $u\in\mathcal{H}, v\in \mathcal{V}$. Consider $z_\ell(\tau)=1/\tau(I-S(\tau))\mathcal{M}_\ell\mathcal{R}_\ell u$ and $z_\ell=\mathrm{d}/\mathrm{d}t\mathcal{M}_\ell\mathcal{R}_\ell u$. As $\mathcal{R}_\ell u\in \mathcal{W}_\ell$, we have $z_\ell(\tau)\to z_\ell$ in $\mathcal{V}_\ell^*$as $\tau\to 0^+$. Hence, it follows that
\begin{equation*}
\|\sfrac{1}{\tau}(I-S(\tau))\mathcal{M}u-z\|_{\mathcal{V}^*}
\leq\sup_{v\in \mathcal{V}\backslash\{0\}}\frac{1}{\|v\|_{\mathcal{V}}}\sum_{\ell=1}^q| \langle z_\ell(\tau)-z_\ell,\mathcal{R}_\ell v\rangle_{\mathcal{V}_\ell^*\times\mathcal{V}_\ell}|\to 0
\end{equation*}
as $\tau\to 0^+$. That is, $\mathcal{M}u\in D(\mathrm{d}/\mathrm{d}t)$ and $\mathrm{d}/\mathrm{d}t\mathcal{M}u=z$.\qed
\end{proof}
Note that~\cref{cor:Well} implies the inclusion $\cap_{\ell=1}^q D(\mathcal{F}_\ell)\subseteq \mathcal{W}$.
\begin{ass}\label{ass:abstract3}
The identity
\begin{equation*}
\begin{aligned}
	\mathcal{F}u=&\mathrm{d}/\mathrm{d}t\mathcal{M} u + \mathcal{A} u + f\\
	=& \sum_{\ell=1}^{q} \mathcal{E}_{\ell} ( \mathrm{d}/\mathrm{d}t\mathcal{M}_\ell\mathcal{R}_\ell u + \mathcal{A}_{\ell} \mathcal{R}_{\ell} u + f_{\ell})
	= \sum_{\ell=1}^q \mathcal{F}_\ell u
\end{aligned}
\end{equation*}
in $\mathcal{H}$ holds for every $u\in\cap_{\ell=1}^q D(\mathcal{F}_\ell)$.
\end{ass}
From~\cref{ass:abstract3} it is clear that $\cap_{\ell=1}^q D(\mathcal{F}_\ell)\subseteq  D(\mathcal{F})$, but the assumption does not imply equality in general. In order to proceed with the analysis, we therefore assume the following additional regularity property.
\begin{ass}\label{ass:regularity}
The solution to $\mathcal{F}u=0$ satisfies $u\in \cap_{\ell=1}^q D(\mathcal{F}_\ell)$.
\end{ass}

\section{Abstract method convergence}\label{sec:conv}

We will combine the abstract Cauchy framework in~\cref{sec:abs} together with the elliptic convergence results derived in~\cite[Proposition~1]{LionsMercier} and~\cite[Theorem~3.1]{Temam}. As these results are central to our analysis, we give the proofs in the current notation.
\begin{theorem}\label{thm:convPR}
Consider the Peaceman--Rachford~\cref{eq:PR} or Douglas--Rachford~\cref{eq:DR} approximation $\{u^n_1,u^n_2\}_{n\in\mathbb{N}}$ of the solution~$u$ to the nonlinear Cauchy problem~$\mathcal{F}u=0$. If~\cref{ass:abstract1,ass:abstract2,ass:abstract3,ass:regularity} hold and $u_2^0\in D(\mathcal{F}_2)$ then
\begin{equation*}
\lim_{n\to\infty}k_1(u^n_1-u)+k_2(u^n_2-u)=0
\end{equation*}
for every method parameter $s>0$.
\end{theorem}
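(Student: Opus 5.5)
We follow the Lions--Mercier argument, working in $\mathcal{H}$ with the maximal accretive operators $\mathcal{F}_1,\mathcal{F}_2$. By \cref{ass:regularity} the solution $u$ lies in $D(\mathcal{F}_1)\cap D(\mathcal{F}_2)$, and \cref{ass:abstract3} gives $\mathcal{F}_1u+\mathcal{F}_2u=0$. Hence $u$ is a fixed point of both schemes: with $u_1^n=u_2^n=u$ we get $(sI+\mathcal{F}_1)u=(sI-\mathcal{F}_2)u$, and likewise for the second equations of \cref{eq:PR} and \cref{eq:DR}. We then introduce the auxiliary quantities $w^n=(sI-\mathcal{F}_2)u_2^n=su_2^n-\mathcal{F}_2u_2^n$ and $w=su-\mathcal{F}_2u$. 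These are well defined since $u_2^0\in D(\mathcal{F}_2)$, and by induction $u_2^n\in R((sI+\mathcal{F}_2)^{-1})=D(\mathcal{F}_2)$. The goal is a telescoping energy identity for $\|w^n-w\|_{\mathcal{H}}^2$.

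The key step is a refined accretivity estimate for each $\mathcal{F}_\ell$ that retains the $k_\ell$ term. For $v,\tilde v\in D(\mathcal{F}_\ell)$ we have
\[(\mathcal{F}_\ell v-\mathcal{F}_\ell\tilde v,v-\tilde v)_{\mathcal{H}}=\langle \mathrm{d}/\mathrm{d}t\mathcal{M}_\ell(\mathcal{R}_\ell v-\mathcal{R}_\ell\tilde v)+\mathcal{A}_\ell\mathcal{R}_\ell v-\mathcal{A}_\ell\mathcal{R}_\ell\tilde v,\mathcal{R}_\ell(v-\tilde v)\rangle\geq k_\ell(\mathcal{R}_\ell(v-\tilde v)).\]
This follows from \cref{lem:accretive} applied to the linear part and the $k$-monotonicity of $\mathcal{A}_\ell$. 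Here we interpret $k_\ell(u_\ell^n-u)$ in the statement as $k_\ell$ evaluated on the restriction, i.e. $k_\ell(\mathcal{R}_\ell(u^n_\ell-u))$.

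For Peaceman--Rachford, set $x^n=(sI+\mathcal{F}_1)u_1^{n+1}$, so that $u_1^{n+1}=(sI+\mathcal{F}_1)^{-1}w^n$. Expanding
\[\|(sI-\mathcal{F}_1)a-(sI-\mathcal{F}_1)b\|^2=\|(sI+\mathcal{F}_1)a-(sI+\mathcal{F}_1)b\|^2-4s(\mathcal{F}_1a-\mathcal{F}_1b,a-b)\]
gives
\[\|(sI-\mathcal{F}_1)u_1^{n+1}-(sI-\mathcal{F}_1)u\|^2\le\|w^n-w\|^2-4s\,k_1(\cdot).\]
The same identity for $\mathcal{F}_2$, using $(sI+\mathcal{F}_2)u_2^{n+1}=(sI-\mathcal{F}_1)u_1^{n+1}$, yields
\[\|w^{n+1}-w\|^2\le\|w^n-w\|^2-4s\bigl(k_1(\mathcal{R}_1(u_1^{n+1}-u))+k_2(\mathcal{R}_2(u_2^{n+1}-u))\bigr).\]
Summing over $n$ shows that the series of $k$-terms is bounded by $\|w^0-w\|^2/(4s)$, so the terms tend to zero.

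For Douglas--Rachford the identity is slightly different. Here $w^{n+1}=w^n+$ something, and the Lions--Mercier computation gives
\[\|w^{n+1}-w\|^2+\|\cdot\|^2\le\|w^n-w\|^2-2s\bigl(k_1+k_2\bigr),\]
with an additional nonnegative term. Writing out this algebraic identity is the main obstacle. I would first express the scheme as $w^{n+1}=\tfrac12(w^n+(2J_2-I)(2J_1-I)w^n)$ in terms of the Cayley-type maps, and then use
\[\|\tfrac12(x+y)\|^2=\tfrac12\|x\|^2+\tfrac12\|y\|^2-\tfrac14\|x-y\|^2\]
together with the Peaceman--Rachford estimate applied to $y$. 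This route takes care of the rescaling by $s$, since $(sI+\mathcal{F})^{-1}$ is a resolvent of $\mathcal{F}/s$. The remaining care is the index shift between $u_2^n$ in the second equation and the fixed-point relations, and checking that each iterate lies in the relevant domain so that the $k$-bounds apply.
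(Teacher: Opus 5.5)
\paragraph{Peaceman--Rachford.} This part of your argument is correct and is essentially the paper's proof. The paper sets $v^n=(sI+\mathcal{F}_2)u_2^n$ and $w^n=(sI-\mathcal{F}_2)u_2^n$. From the same accretivity identity it obtains $\|v^{n+1}-v\|_{\mathcal{H}}^2\le\|w^n-w\|_{\mathcal{H}}^2\le\|v^n-v\|_{\mathcal{H}}^2$. It then bounds $k_1+k_2$ by the accretivity defects, which tend to zero. Keeping the $k_\ell$-terms inside the telescoping sum, as you do, is the same argument. Your reading $k_\ell(\mathcal{R}_\ell(u^n_\ell-u))$ is also what the paper uses implicitly.

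\paragraph{Douglas--Rachford.} Your sketch (the paper omits this case) contains a step that fails. Let $C_\ell=2s(sI+\mathcal{F}_\ell)^{-1}-I$. The recursion $w^{n+1}=\tfrac12(w^n+C_2C_1w^n)$ in the variable $w^n=(sI-\mathcal{F}_2)u_2^n$ holds only when $\mathcal{F}_2$ is affine. The reason is that $w^{n+1}=C_2v^{n+1}$, and the nonlinear map $C_2$ does not commute with averaging. In fact $\|w^n-w\|_{\mathcal{H}}$ need not be nonincreasing at all. Take $\mathcal{H}=\mathbb{R}$, $s=1$, $\mathcal{F}_1=0$, and $\mathcal{F}_2(x)=\max\{x/2,\,2x-1\}$ for $x\geq 0$, extended oddly. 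Then $u=0$ is the unique zero and $w=0$. Starting from $u_2^n=1$ gives $w^n=0=w$, but the scheme \cref{eq:DR} yields $u_2^{n+1}=2/3$ and $w^{n+1}=1/3$, whereas your formula predicts $0$.

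The right variable is $v^n=(sI+\mathcal{F}_2)u_2^n$. The second equation of \cref{eq:DR} reads $v^{n+1}=v^n+s(u_1^{n+1}-u_2^n)$. Since $C_2v^n=w^n$ and $C_1w^n=(sI-\mathcal{F}_1)u_1^{n+1}$, this is exactly $v^{n+1}=\tfrac12(v^n+C_1C_2v^n)$, with fixed point $v=(sI+\mathcal{F}_2)u$. Your averaging identity together with your Cayley estimate then gives
\[
\|v^{n+1}-v\|_{\mathcal{H}}^2+\|v^{n+1}-v^n\|_{\mathcal{H}}^2\le\|v^n-v\|_{\mathcal{H}}^2-2s\bigl(k_1(\mathcal{R}_1(u_1^{n+1}-u))+k_2(\mathcal{R}_2(u_2^{n}-u))\bigr).
\]
The conclusion then follows by telescoping, as in the Peaceman--Rachford case.
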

\begin{proof}
We begin with the Peaceman--Rachford case. The regularity $u\in D(\mathcal{F}_1)\cap D(\mathcal{F}_2)$ implies that $\mathcal{F}_1u=-\mathcal{F}_2u$. If $u^n_2\in D(\mathcal{F}_2)$ then
\begin{equation*}
\begin{aligned}
u^{n+1}_1&=(sI+\mathcal{F}_1)^{-1}(sI-\mathcal{F}_2)u^n_2\in D(\mathcal{F}_1)\quad\text{and}\\
u^{n+1}_2&=(sI+\mathcal{F}_2)^{-1}(sI-\mathcal{F}_1)u^{n+1}_1\in D(\mathcal{F}_2).
\end{aligned}
\end{equation*}
As $u_2^0\in D(\mathcal{F}_2)$, we have by induction that $\{u^{n}_1,u^{n}_2\}_{n\in\mathbb{N}}\subset  D(\mathcal{F}_1)\times D(\mathcal{F}_2)$. Let
\begin{equation*}
v^n=(sI+\mathcal{F}_2)u^n_2,\quad v=(sI+\mathcal{F}_2)u,\quad
w^n=(sI-\mathcal{F}_2)u^n_2\quad\text{and}\quad w=(sI-\mathcal{F}_2)u.
\end{equation*}
This notation implies the relations
\begin{equation*}
\begin{gathered}
u =\frac{v+w}{2s},\quad u^n_2=\frac{v^n+w^n}{2s},\quad u^{n+1}_1=\frac{v^{n+1}+w^n}{2s},\\
\mathcal{F}_2u=\frac{v-w}{2}, \quad\mathcal{F}_2 u^n_2=\frac{v^n-w^n}{2},\quad
\mathcal{F}_1u=\frac{w-v}{2}, \quad\mathcal{F}_1u^{n+1}_1=\frac{w^n-v^{n+1}}{2}.
\end{gathered}
\end{equation*}
The accretivity of $\mathcal{F}_\ell$ then gives the two bounds
\begin{equation*}
\begin{aligned}
0\leq (\mathcal{F}_2u^n_2-\mathcal{F}_2 u, u^n_2 -u)_{\mathcal{H}}
&= \sfrac{1}{4s} \bigl((v^n-v)-(w^n-w),(v^n-v)+(w^n-w)\bigr)_{\mathcal{H}} \\
&=\sfrac{1}{4s} \bigl(\|v^n-v\|^2_{\mathcal{H}}-\|w^n-w\|^2_{\mathcal{H}}\bigr)
\end{aligned}
\end{equation*}
and
\begin{equation*}
\begin{aligned}
0&\leq (\mathcal{F}_1u^{n+1}_1-\mathcal{F}_1 u, u^{n+1}_1 -u)_{\mathcal{H}} \\
&= \sfrac{1}{4s} \bigl((w^n-w)-(v^{n+1}-v),(w^n-w)+(v^{n+1}-v)\bigr)_{\mathcal{H}} \\
&=\sfrac{1}{4s} \bigl(\|w^n-w\|^2_{\mathcal{H}}-\|v^{n+1}-v\|^2_{\mathcal{H}}\bigr).
\end{aligned}
\end{equation*}
The two bounds then yield $\|v^{n+1}-v\|^2_{\mathcal{H}}\leq \|w^n-w\|^2_{\mathcal{H}} \leq \|v^n-v\|^2_{\mathcal{H}}$. This shows that the real valued sequence $\{\|v^n-v\|^2_{\mathcal{H}}\}_{n \in \mathbb{N}}$ is monotonously decreasing and bounded from below by zero. Thus, it converges and it follows that
\begin{equation*}
\|v^n-v\|^2_{\mathcal{H}}-\|v^{n+1}-v\|^2_{\mathcal{H}}\to 0\quad\text{as }n\to\infty.
\end{equation*}
This combined with the two bounds above and~\cref{eq:accretive}, implies that
\begin{equation*}
0\leq \sum_{\ell=1}^2 k_\ell (u^n_\ell-u) \leq
\sum_{\ell=1}^2 (\mathcal{F}_\ell u^n_\ell-\mathcal{F}_\ell u, u^n_\ell -u)_{\mathcal{H}}\to 0
\end{equation*}
as $n\to\infty$. The convergence proof for the Douglas--Rachford scheme follows in the same fashion and is therefore omitted.
\qed
\end{proof}

For the additive splitting~\cref{eq:AS}, the parameter $s$ needs to be chosen more carefully, as a function of $N$. An optimal choice of $s$ is parameter dependent, compare~\cite[Theorem~5.1]{Temam} for more details. For the sake of simplicity, we choose $s=C\sqrt{N}$ that fulfills \cite[Remark~3.1]{Temam} and guarantees convergence while keeping the notation compact.

\begin{theorem}\label{thm:convAS}
	Consider the additive splitting~\cref{eq:AS} approximation $\{u^n\}_{n=1}^N$ of the solution~$u$ to the nonlinear Cauchy problem~$\mathcal{F}u=0$. If~\cref{ass:abstract1,ass:abstract2,ass:abstract3,ass:regularity} hold and $k_\ell\geq c\|\cdot\|^2_{\mathcal{H}}$ then it follows that
	\begin{equation*}
		\lim_{N\to\infty}\|u^N-u\|_{\mathcal{H}}=0
	\end{equation*}
	for the parameter choice $s=C\sqrt{N}$.
\end{theorem}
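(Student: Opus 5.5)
We follow the argument of \cite[Theorem~3.1]{Temam} and work in $\mathcal{H}$, using only the maximal accretivity of each $\mathcal{F}_\ell$ together with the strong monotonicity coming from $k_\ell\geq c\|\cdot\|^2_{\mathcal{H}}$. By \cref{ass:regularity}, $u\in\cap_{\ell}D(\mathcal{F}_\ell)$, and by \cref{ass:abstract3}, $\sum_{\ell=1}^q\mathcal{F}_\ell u=0$. Write $g_\ell=\mathcal{F}_\ell u$. Since the resolvents are defined on all of $\mathcal{H}$, every iterate $u^{n+1}_\ell=(sI+\mathcal{F}_\ell)^{-1}(su^n)$ is well defined, even though $u^n$ itself need not lie in any $D(\mathcal{F}_\ell)$.

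The key step is an energy identity for one iteration. Subtract $su+\ldots$ from the defining equation. Set $e^n=u^n-u$ and $e_\ell^{n+1}=u_\ell^{n+1}-u$. Then
\begin{equation*}
s e_\ell^{n+1}+\bigl(\mathcal{F}_\ell u_\ell^{n+1}-\mathcal{F}_\ell u\bigr)=s e^n-g_\ell .
\end{equation*}
Test this with $e_\ell^{n+1}$ and use $(a-b,a)=\tfrac12(\|a\|^2-\|b\|^2+\|a-b\|^2)$. With the strong monotonicity (the bound \cref{eq:accretive} combined with $k_\ell\geq c\|\cdot\|^2$), this gives
\begin{equation*}
\tfrac{s}{2}\bigl(\|e_\ell^{n+1}\|^2-\|e^n\|^2+\|e_\ell^{n+1}-e^n\|^2\bigr)+c\|e_\ell^{n+1}\|^2\leq -(g_\ell,e_\ell^{n+1})_{\mathcal{H}} .
\end{equation*}
Now sum over $\ell$. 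Because $\sum_\ell g_\ell=0$, the right-hand side equals $-\sum_\ell(g_\ell,e_\ell^{n+1}-e^n)_{\mathcal{H}}$. By Young's inequality this is bounded by $\tfrac{s}{2}\sum_\ell\|e_\ell^{n+1}-e^n\|^2+\tfrac{1}{2s}\sum_\ell\|g_\ell\|^2$. Convexity gives $\|e^{n+1}\|^2\leq \tfrac1q\sum_\ell\|e^{n+1}_\ell\|^2$. Combining these, we arrive at the recursion
\begin{equation*}
(1+\tfrac{2c}{s})\|e^{n+1}\|^2_{\mathcal{H}}\leq \|e^n\|^2_{\mathcal{H}}+\frac{G}{qs^2},\qquad G=\sum_{\ell=1}^q\|\mathcal{F}_\ell u\|^2_{\mathcal{H}} .
\end{equation*}

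We then iterate the recursion $N$ times with $\rho=(1+2c/s)^{-1}$. This yields
\begin{equation*}
\|e^N\|^2\leq \rho^N\|e^0\|^2+\frac{G}{qs^2}\sum_{j\geq1}\rho^j\leq \rho^N\|e^0\|^2+\frac{G}{2cqs}.
\end{equation*}
With $s=C\sqrt N$ we get $\rho^N=(1+2c/(C\sqrt N))^{-N}\leq \exp\bigl(-N\log(1+2c/(C\sqrt N))\bigr)\to0$, since the exponent behaves like $-2c\sqrt N/C$. The second term is $O(N^{-1/2})$. Together these give $\|u^N-u\|_{\mathcal{H}}\to0$.

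The main obstacle is the coupling term $\sum_\ell (g_\ell,e_\ell^{n+1})$. It does not vanish, because the local errors $e_\ell^{n+1}$ differ from one another, which is exactly why additive splitting does not preserve the equilibrium. The trick above handles it by using $\sum g_\ell=0$ to insert $e^n$ and then absorbing the difference into the dissipation term $\|e_\ell^{n+1}-e^n\|^2$. This leaves a consistency error of order $1/s^2$ per step, and only the strong monotonicity $k_\ell\geq c\|\cdot\|^2$ lets these errors be summed uniformly in $n$. A technical point to check is that the accretivity estimate for $\mathcal{F}_\ell$ indeed yields $k_\ell(u^{n+1}_\ell-u)$, interpreted through $\mathcal{R}_\ell$, so that the assumed lower bound applies to the full $\mathcal{H}$-norm.
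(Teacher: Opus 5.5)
Your proposal is essentially the paper's proof. The paper likewise tests the error equation with the local error (scaled by $s$) and uses the polarization identity together with $k_\ell\geq c\|\cdot\|^2_{\mathcal{H}}$. It averages over $\ell$ using convexity, inserts $u^n-u$ via $\sum_{\ell}\mathcal{F}_\ell u=0$ so that Young's inequality absorbs the coupling term into $\|u^{n+1}_\ell-u^n\|^2_{\mathcal{H}}$, and iterates $(1+2c/s)\|u^{n+1}-u\|^2_{\mathcal{H}}\leq\|u^n-u\|^2_{\mathcal{H}}+\frac{1}{qs^2}\sum_\ell\|\mathcal{F}_\ell u\|^2_{\mathcal{H}}$ with $s=C\sqrt{N}$.

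The technical point you flag is handled in the paper exactly as in your write-up, by applying $k_\ell\geq c\|\cdot\|^2_{\mathcal{H}}$ directly to the full error $u^{n+1}_\ell-u$. Read strictly through $\mathcal{R}_\ell$, this only controls $\|\mathcal{R}_\ell(u^{n+1}_\ell-u)\|^2_{\mathcal{H}_\ell}$. However, since the subdomains cover $\Omega$ (so $\sum_\ell\mathcal{E}_\ell\mathcal{R}_\ell\geq I$), a weighted Cauchy--Schwarz inequality with weights $\tfrac{s}{2}I+c\,\mathcal{E}_\ell\mathcal{R}_\ell$ in the averaging step still gives the recursion, with $1+2c/s$ replaced by $1+2c/(qs+2c(q-1))$, and the conclusion for $s=C\sqrt{N}$ is unchanged.
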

\begin{proof}
	Let $v_\ell^{n+1}=u^{n+1}_\ell-u$ and $v^n=u^n-u$. Then~\cref{eq:AS} gives
	\begin{equation*}
		s(v_\ell^{n+1}-v^n)+(\mathcal{F}_\ell u^{n+1}_\ell-\mathcal{F}_\ell u)=-\mathcal{F}_\ell u\in\mathcal{H}.
	\end{equation*}
	By applying $\langle\cdot, sv_\ell^{n+1}\rangle_{\mathcal{V}_\ell^*\times \mathcal{V}_\ell}$ to the equation above, we obtain
	\begin{equation*}
		s^2(v_\ell^{n+1}-v^n,v_\ell^{n+1})_{\mathcal{H}}+s\langle \mathcal{F}_\ell u^{n+1}_\ell-\mathcal{F}_\ell u, v_\ell^{n+1}\rangle_{\mathcal{V}_\ell^*\times \mathcal{V}_\ell}
		=(-\mathcal{F}_\ell u,sv_\ell^{n+1})_{\mathcal{H}}.
	\end{equation*}
	The first term to the right can be rewritten as
	\begin{equation*}
		\begin{aligned}
			s^2(v_\ell^{n+1}-v^n,v_\ell^{n+1})_{\mathcal{H}}&=
			\sfrac{s^2}{2}(v_\ell^{n+1}-v^n,v_\ell^{n+1})_{\mathcal{H}}+\sfrac{s^2}{2}(v_\ell^{n+1}-v^n,v_\ell^{n+1}-v^n)_{\mathcal{H}}\\
			&\quad+\sfrac{s^2}{2}(v_\ell^{n+1}-v^n,v^n)_{\mathcal{H}}\\
			&=\sfrac{s^2}{2}\|v^{n+1}_\ell\|^2_{\mathcal{H}}+\sfrac{s^2}{2}\|v^{n+1}_\ell-v^n\|^2_{\mathcal{H}}-\sfrac{s^2}{2}\|v^n\|^2_{\mathcal{H}},
		\end{aligned}
	\end{equation*}
	and the bound $k_\ell\geq c\|\cdot\|^2_{\mathcal{H}}$ then yields the inequality
	\begin{equation*}
		s(s+2c)\|v^{n+1}_\ell\|^2_{\mathcal{H}}-s^2\|v^n\|^2_{\mathcal{H}}+s^2\|v^{n+1}_\ell-v^n\|^2_{\mathcal{H}}\leq 2(-\mathcal{F}_\ell u,sv_\ell^{n+1})_{\mathcal{H}}.
	\end{equation*}
	Taking the average of these inequalities, for $\ell=1,\ldots,q$, and noting that
	\begin{equation*}
		\|v^{n+1}\|^2_{\mathcal{H}}\leq \sfrac{1}{q} \sum_{\ell=1}^q\|v^{n+1}_\ell\|^2_{\mathcal{H}},
	\end{equation*}
	by the convexity of $\|\cdot\|_{\mathcal{H}}^2$, implies
	\begin{equation*}
		s(s+2c)\|v^{n+1}\|^2_{\mathcal{H}}-s^2\|v^n\|^2_{\mathcal{H}}+\sfrac{s^2}{q} \sum_{\ell=1}^q\|v^{n+1}_\ell-v^n\|^2_{\mathcal{H}}\leq\sfrac{2}{q}\sum_{\ell=1}^q (-\mathcal{F}_\ell u,sv_\ell^{n+1})_{\mathcal{H}}.
	\end{equation*}
	As $\mathcal{F} u=\sum_{\ell=1}^q\mathcal{F}_\ell u=0$, the right-hand side of the above inequality is bounded by
	\begin{equation*}
		\begin{aligned}
			\sfrac{2}{q}\sum_{\ell=1}^q (-\mathcal{F}_\ell &u,sv_\ell^{n+1})_{\mathcal{H}}=
			\sfrac{2}{q}\sum_{\ell=1}^q \big(\bigl(-\mathcal{F}_\ell u,s(v_\ell^{n+1}-v^n)\bigr)_{\mathcal{H}}+(-\mathcal{F}_\ell u,sv^n)_{\mathcal{H}}\big)\\
			&=\sfrac{2}{q}\sum_{\ell=1}^q \bigl(-\mathcal{F}_\ell u,s(v_\ell^{n+1}-v^n)\bigr)_{\mathcal{H}}
			\leq\sfrac{1}{q}\sum_{\ell=1}^q \big(\|\mathcal{F}_\ell u\|^2_{\mathcal{H}}+s^2\|v_\ell^{n+1}-v^n\|^2_{\mathcal{H}}\big).
		\end{aligned}
	\end{equation*}
	Hence, it follows that
	\begin{equation*}
		s(s+2c)\|v^{n+1}\|^2_{\mathcal{H}}-s^2\|v^n\|^2_{\mathcal{H}}\leq  \sfrac{1}{q}\sum_{\ell=1}^q\|\mathcal{F}_\ell u\|^2_{\mathcal{H}}.
	\end{equation*}
	For the constant $C(\mathcal{F}_\ell)=1/q\sum_{\ell=1}^q\|\mathcal{F}_\ell u\|^2_{\mathcal{H}}$, we then obtain the bound
	\begin{equation*}
		\begin{aligned}
			\|u^N-u\|^2_{\mathcal{H}}&\leq \frac{1}{(1+2c/s)^N}\|u^0-u\|^2_{\mathcal{H}}+C(\mathcal{F}_\ell)\frac{1}{s^2}\sum^N_{n=1}\frac{1}{(1+2c/s)^n}\\
			&=\frac{1}{(1+2c/s)^N}\|u^0-u\|^2_{\mathcal{H}}+C(\mathcal{F}_\ell)\frac{1}{2cs}\bigl(1-\frac{1}{(1+2c/s)^N}\bigr).
		\end{aligned}
	\end{equation*}
	The parameter choice $s=C\sqrt{N}$ then gives the sought after convergence in $\mathcal{H}$, as $1/(1+C/\sqrt{N})^{N}\to 0$ as $N\to\infty$.
		\qed
\end{proof}

\section{Degenerate elliptic-parabolic equations with $p$-structures}\label{sec:par}

Multiplying the degenerate elliptic-parabolic equation~\cref{eq:parabolic} with a sufficiently regular test function $v$, with $v(T)=0$, formally gives the weak form
\begin{equation*}
\int_0^T\int_\Omega -\gamma u\partial_t v+\alpha(t,\nabla u)\cdot \nabla v+\beta(t,u)v+f(t)v\,\mathrm{d}x\mathrm{d}t=0.
\end{equation*}
Comparing with~\cref{eq:weak} motivates the choices
\begin{equation*}
H=L^2(\Omega),\quad V=W^{1,p}(\Omega),\quad \text{and}\quad (Mu)(x)=\gamma(x)u(x),
\end{equation*}
where $p\in[2,\infty)$, $\gamma\in L^\infty(\Omega)$ is nonnegative, together with the operators $A(t)\colon V\to V^*$, $t\in (0,T)$, given by
\begin{equation*}
\langle A(t)u,v\rangle_{V^*\times V}=\int_\Omega \alpha(t,\nabla u)\cdot \nabla v+\beta(t,u)v\,\mathrm{d}x
\quad\text{for }u,v\in V.
\end{equation*}
To decompose $H$ we consider $H_\ell=L^2(\Omega_\ell)$, with the usual norm. The connection between $H$ and $H_{\ell}$ is the given by the zero extension operator $E_{\ell} \colon H_{\ell} \to H$ and the restriction operator $R_{\ell} \colon H  \to H_{\ell}$, i.e.,
\begin{equation*}
 (E_{\ell} u_{\ell}) (x) =
 \begin{cases}
	0 & \text{ for }x \in \Omega \setminus\Omega_\ell\\
	u_{\ell}(x)  &\text{ for }x\in\Omega_\ell
 \end{cases}
 \quad
 \text{and}\quad R_{\ell} u = u|_{\Omega_\ell}
\end{equation*}
for all $u_{\ell} \in H_{\ell}$ and $u \in H$. Note that these operators fulfill $(E_{\ell} u_{\ell}, u)_{H} = (u_{\ell},R_{\ell} u)_{H_{\ell}}$.

Next, we will introduce the decomposed operators and source terms. To this end, recall the overlapping subdomains $\{\Omega_\ell\}_{\ell=1}^q$ from~\cref{sec:scheme}, and consider the partition of unities with the weights $\chi_\ell^\alpha=E_\ell a_\ell$,  $\chi_\ell^\beta=E_\ell b_\ell$, and $\chi_\ell^\gamma=E_\ell g_\ell$. Here, we assume
\begin{equation*}
a_\ell\in W_0^{1,\infty}(\Omega_\ell),\quad b_\ell\in \{v\in L^{\infty}(\Omega_\ell): v(x)\geq c\text{ for a.e.\ $x\in\Omega_\ell$}\},\quad\text{and}\quad g_\ell\in L^{\infty}(\Omega_\ell),
\end{equation*}
for $\ell=1,\ldots,q$. A possible choice of~$\chi_\ell^f$ will be presented in~\cref{ex:f}. 

For an arbitrary weight $w_\ell \in L^{\infty}(\Omega_{\ell})$, let $L^p(\Omega_\ell,w_\ell)$ be the set of measurable functions $u$ on $\Omega_\ell$ such that the weighted norm
\begin{equation*}
\|u\|_{L^p(\Omega_\ell,w_\ell)}=\bigl(\int_{\Omega_\ell}w_\ell |u|^p\,\mathrm{d}x\bigr)^{\frac{1}{p}}
\end{equation*}
is finite. This is a separable and reflexive Banach space. If $w_\ell(x)\geq c$ for a.e.\ $x\in\Omega_\ell$ then $L^p(\Omega_{\ell},w_\ell) $ is isometric isomorphic to $L^p(\Omega_{\ell})$. We can then define the spaces
\begin{equation*}
\begin{aligned}
V_\ell =\bigl\{&u \in L^p(\Omega_{\ell},b_\ell) : \text{ there exists $z_j\in L^p(\Omega_\ell,a_\ell), j=1,\ldots, d$, with}\\
& \langle a_\ell(\nabla u)_j,v \rangle_{W^{-1,p}(\Omega_{\ell})\times W_0^{1,p}(\Omega_{\ell})}=-\int_{\Omega_\ell}a_\ell z_jv\,\mathrm{d}x
\quad\text{for all }v\in W_0^{1,p}(\Omega_{\ell})\bigr\}
\end{aligned}
\end{equation*}
for $\ell=1,\ldots,q$. We equip $V_\ell$ with the norm
\begin{equation*}
\|u\|_{V_\ell}=\bigl(\|\nabla u\|^p_{L^p(\Omega_\ell,a_\ell)^d}+\|u\|^p_{L^p(\Omega_\ell,b_\ell)}\bigr)^{1/p}.
\end{equation*}
As for standard weak derivatives, we will not distinguish between the distributional gradient of $u\in V_\ell$ and the associated vector of weighted $L^p$-functions $z=(z_1,\ldots z_d)$. Instead, both will be denoted by~$\nabla u$.
\begin{lemma}\label{lem:Vell}
The spaces $V_\ell, \ell=1,\ldots,q$, are separable, reflexive Banach spaces, which are all densely embedded into $H_{\ell}$.
\end{lemma}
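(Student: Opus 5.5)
The plan is to realize $V_\ell$ isometrically as a closed subspace of a product of weighted $L^p$ spaces, and then inherit completeness, separability and reflexivity from that product. Consider the map
\begin{equation*}
J\colon V_\ell\to X_\ell:=L^p(\Omega_\ell,b_\ell)\times L^p(\Omega_\ell,a_\ell)^d,\qquad Ju=(u,\nabla u),
\end{equation*}
where $X_\ell$ carries the $\ell^p$-product norm. By the definition of $\|\cdot\|_{V_\ell}$, the map $J$ is a linear isometry. Since $X_\ell$ is a finite product of separable, reflexive Banach spaces, it is itself separable and reflexive. Closed subspaces of such spaces are again separable and reflexive Banach spaces. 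So it suffices to show that $J(V_\ell)$ is closed.

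For closedness, take $u_n\in V_\ell$ with $u_n\to u$ in $L^p(\Omega_\ell,b_\ell)$ and $\nabla u_n\to z$ in $L^p(\Omega_\ell,a_\ell)^d$.
\begin{itemize}
\item Since $b_\ell\geq c$, we get $u_n\to u$ in $L^p(\Omega_\ell)$. On the bounded domain this gives convergence in $L^1_{\mathrm{loc}}$ and hence in distributions.
\item Fix $v\in W_0^{1,p}(\Omega_\ell)$. Since $a_\ell\in W^{1,\infty}_0$, the pairing $\langle a_\ell(\nabla u_n)_j,v\rangle$ should be read as $-\int_{\Omega_\ell} u_n\,\partial_j(a_\ell v)\,\mathrm{d}x$. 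Here $a_\ell v\in W_0^{1,p}(\Omega_\ell)\subset L^{p'}$ with $\partial_j(a_\ell v)\in L^p\subset L^{p'}$, using $p\geq 2$ and the boundedness of $\Omega_\ell$.
\item This functional is therefore continuous in $u_n\in L^p$, and it passes to the limit, giving $-\int u\,\partial_j(a_\ell v)\,\mathrm{d}x$.
\item On the other side, $\int a_\ell (\nabla u_n)_j v\,\mathrm{d}x\to\int a_\ell z_j v\,\mathrm{d}x$ by H\"older's inequality in the weighted spaces: $|\int a_\ell w v|\leq \|w\|_{L^p(a_\ell)}\|v\|_{L^{p'}(a_\ell)}$, and $\|v\|_{L^{p'}(a_\ell)}$ is finite because $a_\ell$ is bounded.
\item Hence $u\in V_\ell$ with $\nabla u=z$, so the image is closed.
\end{itemize}
The main obstacle is making the defining identity precise, in particular interpreting $a_\ell\nabla u$ as an element of $W^{-1,p}$ and checking that it depends continuously on $u$. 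I would handle this exactly as above, through the product rule with the Lipschitz weight $a_\ell$.

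For the embedding, $\|u\|_{L^2(\Omega_\ell)}\leq C\|u\|_{L^p(\Omega_\ell)}\leq C'\|u\|_{L^p(\Omega_\ell,b_\ell)}\leq C'\|u\|_{V_\ell}$. This uses $p\geq2$, the boundedness of $\Omega_\ell$, and $b_\ell\geq c$, and it gives a continuous injection $V_\ell\hookrightarrow H_\ell$.

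For density, note that $C_c^\infty(\Omega_\ell)\subset V_\ell$. Indeed, for smooth $u$ the classical gradient $z=\nabla u$ satisfies the identity, by integrating $a_\ell\,\partial_j u\, v$ by parts, and $z\in L^\infty\subset L^p(\Omega_\ell,a_\ell)$. Since $C_c^\infty(\Omega_\ell)$ is dense in $L^2(\Omega_\ell)$, the space $V_\ell$ is dense in $H_\ell$. When $\Omega_\ell$ is a disjoint union of the Lipschitz domains $\Omega_{\ell,j}$, every step above applies componentwise.
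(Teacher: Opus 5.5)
Your argument is correct and follows essentially the same route as the paper, which delegates the proof to Lemmas 1--3 of Eisenmann--Hansen plus Theorem 1.22 of Adams--Fournier for separability: identify $V_\ell$ with a closed subspace of $L^p(\Omega_\ell,b_\ell)\times L^p(\Omega_\ell,a_\ell)^d$, get closedness by passing to the limit in the defining identity so that completeness, reflexivity and separability are inherited, and get the dense embedding from $L^p(\Omega_\ell)\hookrightarrow L^2(\Omega_\ell)$ and $C_c^\infty(\Omega_\ell)\subset V_\ell$. One cosmetic point: if $\|\nabla u\|_{L^p(\Omega_\ell,a_\ell)^d}$ uses the Euclidean modulus of $\nabla u$, then $J$ is an isometry for that equivalent product norm rather than the $\ell^p$-product norm, which changes nothing in the argument.
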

The proof follows as in~\cite[Lemmas~1--3]{Eisenmann} together with the observation that the separability also holds by~\cite[Theorem~1.22]{Adams}.

The operators $M_\ell \colon H_\ell \to H_\ell, \ell=1,\ldots,q$, given by
\begin{equation*}
(M_\ell u)(x)=g_\ell(x)(R_\ell \gamma)(x)u(x)\quad\text{for a.e.\ }x\in\Omega_\ell,
\end{equation*}
are then linear, bounded, monotone, symmetric, and fulfill $\sum_{\ell=1}^q E_{\ell}M_\ell R_{\ell}=M$. The families of operators $A_\ell(t) \colon V_\ell\to V_\ell^*$, $t\in (0,T), \ell=1,\ldots,q$, are defined by
\begin{equation*}
\langle A_\ell(t)u,v\rangle_{V_\ell^*\times V_\ell}=\int_{\Omega_\ell}a_\ell \alpha(t,\nabla u)\cdot \nabla v+b_\ell\beta(t,u)v\,\mathrm{d}x
\quad\text{for }u,v\in V_\ell.
\end{equation*}

We will consider degenerate elliptic-parabolic equations that have a $p$-structure, i.e., the functions $\alpha,\beta$ have the properties below.
\begin{ass}\label{ass:parabolic}
Let the function $d_1\in L^{p/(p-1)}\bigl(\Omega\times(0,T)\bigr)$ be nonnegative and $d_2\in  L^1\bigl(\Omega\times(0,T)\bigr)$. The functions $\alpha\colon\Omega\times (0,T)\times \mathbb{R}^d\to\mathbb{R}^d$ and $\beta\colon\Omega\times (0,T)\times \mathbb{R}\to \mathbb{R}$ satisfy, for a.e.\ $(x,t)\in \Omega \times (0,T)$,
\begin{enumerate}
\item the maps $\alpha(x,t,z)$ and $\beta(x,t,y)$ are measurable in $x,t$, and continuous in $y,z$;
\item for all $y\in  \mathbb{R},z\in  \mathbb{R}^d$ one has the bounds
\begin{equation*}
|\alpha(x,t,z)|\leq C|z|^{p-1}+d_1(x,t)\quad\text{and}\quad|\beta(x,t,y)|\leq C|y|^{p-1}+d_1(x,t);
\end{equation*}
\item for all $y_1,y_2\in\mathbb{R}, z_1,z_2\in\mathbb{R}^d$ it holds that
\begin{equation*}
\begin{aligned}
\bigl(\alpha(x,t,z_1)-&\alpha(x,t,z_2)\bigr)\cdot(z_1-z_2)+\bigl(\beta(x,t,y_1)-\beta(x,t,y_2)\bigr)(y_1-y_2)\\
&\geq c(|z_1-z_2|^p+|y_1-y_2|^p);
\end{aligned}
\end{equation*}
\item for all $y\in\mathbb{R}, z\in\mathbb{R}^d$ one has $\alpha(x,t,z)\cdot z+\beta(x,t,y) y\geq c(|z|^p+|y|^p)-d_2(x,t)$.
\end{enumerate}
\end{ass}
\begin{example}
The standard case that fulfills~\cref{ass:parabolic} is the parabolic $p$-Laplace problem with a nonlinear reaction term, i.e.,
\begin{equation*}
\alpha(x,t,z)=|z|^{p-2}z\quad\text{and}\quad \beta(x,t,y)=|y|^{p-2}y+\lambda y,
\end{equation*}
where $\lambda\geq 0$.
\end{example}
\begin{ass}\label{ass:rhs}
The source term $f\in \mathcal{V}^*$ in~\cref{eq:parabolic} can be decomposed as 
\begin{equation*}
\langle f,v\rangle_{\mathcal{V}^*\times\mathcal{V}} =\sum_{\ell=1}^q\langle f_\ell,\mathcal{R}_\ell v\rangle_{\mathcal{V}_\ell^*\times\mathcal{V}_\ell} 
\end{equation*}
for all $v\in \mathcal{V}$.
\end{ass}
\begin{example}\label{ex:f}
Let $\eta_0\in L^{p/(p-1)}\bigl(\Omega\times (0,T)\bigr)$ and $\eta\in L^{p/(p-1)}\bigl(\Omega\times (0,T)\bigr)^d$. Then the functional $f$ given by
\begin{align*}
	\langle f, v \rangle_{\mathcal{V}^* \times \mathcal{V}}
	= \int_{0}^{T} \int_{\Omega} \eta_0(t) v(t) + \eta(t)\cdot\nabla v(t)\,\mathrm{d}x\mathrm{d}t \quad\text{for }v\in V
\end{align*}
is an element in $\mathcal{V}^*$. We can then decompose $f$ into elements $f_{\ell}$ in $\mathcal{V}_{\ell}^*$ through
\begin{align*}
	\langle f_{\ell}, v \rangle_{\mathcal{V}_{\ell}^* \times \mathcal{V}_{\ell}}= \int_{0}^{T} \int_{\Omega_{\ell}}b_{\ell} R_{\ell} \eta_0(t) v(t) + a_{\ell} R_{\ell} \eta(t)\cdot \nabla v(t)\,\mathrm{d}x\mathrm{d}t \quad\text{for }v \in \mathcal{V}_{\ell}.
\end{align*}
This implies that
\begin{align*}
	\sum_{\ell=1}^q\langle f_\ell,\mathcal{R}_\ell v\rangle_{\mathcal{V}_\ell^*\times\mathcal{V}_\ell}
	&= \sum_{\ell=1}^q\int_{0}^{T} \int_{\Omega_{\ell}} b_{\ell} R_{\ell} \eta_0(t) R_{\ell} v(t) + a_{\ell} R_{\ell} \eta(t)\cdot \nabla R_{\ell} v(t)\,\mathrm{d}x \mathrm{d}t\\
	&= \int_{0}^{T} \int_{\Omega} \sum_{\ell=1}^q (E_\ell b_{\ell})\eta_0(t) v(t) + (E_\ell a_{\ell}) \eta(t)\cdot \nabla v(t)\,\mathrm{d}x \mathrm{d}t\\
	&= \langle f,v\rangle_{\mathcal{V}^*\times\mathcal{V}},
\end{align*}
for all $v\in \mathcal{V}$, i.e.,~\cref{ass:rhs} holds for this family of functionals. 
\end{example}
\begin{lemma}\label{lem:Aell}
If~\cref{ass:parabolic} holds then $A_\ell(t)\colon V_\ell\to V_\ell^*$ satisfies, for a.e.\ $t\in (0,T)$,
\begin{enumerate}
\item $\|A_\ell(t)u\|_{V_\ell^*}\leq C\|u\|_{V_\ell}^{p-1}+d_3(t)$ for all $u\in V_\ell$;
\item $k_\ell$-monotone with $k_\ell u=c\|u\|_{V_\ell}^p$;
\item $A_\ell(t)$ is hemicontinuous;
\item $\langle A_{\ell}(t)u,u\rangle_{V_{\ell}^*\times V_{\ell}}\geq c\|u\|_{V_{\ell}}^p-d_4(t)$ for all $u\in V_{\ell}$;
\item $t\mapsto\langle A_\ell(t) u, v \rangle_{V_\ell^*\times V}$ is measurable on $(0,T)$ for all $u,v\in V_\ell$,
\end{enumerate}
where $d_3\in L^{p/(p-1)}(0,T)$ is nonnegative and $d_4\in L^1(0,T)$. The same properties hold for $A(t)$, with $V_\ell$ replaced by $V$.
\end{lemma}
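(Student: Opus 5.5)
The plan is to reduce every property to a pointwise statement about $\alpha$ and $\beta$ from \cref{ass:parabolic}, integrated against the weights $a_\ell$ and $b_\ell$. Each weighted integral is then handled with Hölder's inequality in the weighted spaces $L^p(\Omega_\ell,a_\ell)$ and $L^p(\Omega_\ell,b_\ell)$. Since $0\le a_\ell\le C$ and $c\le b_\ell\le C$, the weights can be absorbed wherever needed. The statement for $A(t)$ on $V=W^{1,p}(\Omega)$ is the special case $a\equiv b\equiv1$, $\Omega_\ell=\Omega$, so I would only write out the $V_\ell$ case.

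\emph{Step 1 (boundedness).} For $u,v\in V_\ell$, write
\[
\int_{\Omega_\ell}a_\ell\,\alpha(t,\nabla u)\cdot\nabla v\,\mathrm{d}x=\int_{\Omega_\ell}\bigl(a_\ell^{(p-1)/p}\alpha\bigr)\cdot\bigl(a_\ell^{1/p}\nabla v\bigr)\,\mathrm{d}x .
\]
Apply Hölder with exponents $p/(p-1)$ and $p$, and then the growth bound $|\alpha|\le C|z|^{p-1}+d_1$. This gives
\[
\|a_\ell^{(p-1)/p}\alpha\|_{L^{p/(p-1)}}\le C\|\nabla u\|_{L^p(\Omega_\ell,a_\ell)}^{p-1}+C\|d_1(t)\|_{L^{p/(p-1)}(\Omega)} ,
\]
where the identity $(a_\ell^{(p-1)/p}|\nabla u|^{p-1})^{p/(p-1)}=a_\ell|\nabla u|^p$ is used and $a_\ell$ is bounded. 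The $\beta$-term is treated identically with $b_\ell$. Setting $d_3(t)=C\|d_1(\cdot,t)\|_{L^{p/(p-1)}(\Omega)}$, Fubini gives $d_3\in L^{p/(p-1)}(0,T)$.

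\emph{Step 2 (monotonicity and coercivity).} For $k_\ell$-monotonicity, property 3 of \cref{ass:parabolic} applies pointwise. The obstacle is that it couples the $\alpha$- and $\beta$-differences in a single inequality, whereas the weights $a_\ell$ and $b_\ell$ differ. I would first try to decouple the inequality. Choosing $y_1=y_2$ gives $(\alpha(z_1)-\alpha(z_2))\cdot(z_1-z_2)\ge c|z_1-z_2|^p$, and choosing $z_1=z_2$ gives the corresponding bound for $\beta$. Multiplying these by $a_\ell\ge0$ and $b_\ell$ respectively and adding yields $\langle A_\ell u-A_\ell v,u-v\rangle\ge c\|u-v\|_{V_\ell}^p$.

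Coercivity (item 4) is the main obstacle, because of the additive $d_2$, which may be negative in parts. Decoupling is again the first thing I would try. The choice $y=0$ gives $\alpha(z)\cdot z\ge c|z|^p-d_2-\beta(0)\cdot0=c|z|^p-d_2$. Similarly, $z=0$ gives $\beta(y)y\ge c|y|^p-d_2$. Multiplying by $a_\ell$ and $b_\ell$ and integrating produces the loss term $-\int(a_\ell+b_\ell)d_2$. This is bounded below by $-C\|d_2(t)\|_{L^1(\Omega)}=: -d_4(t)$ with $d_4\in L^1(0,T)$, using $|d_2|$ and the boundedness of the weights. One point to check is that the gradient part lives in the weighted norm, where $a_\ell$ may vanish; this is harmless because the norm on $V_\ell$ is defined with the same weight.

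\emph{Step 3 (hemicontinuity and measurability).} For hemicontinuity, take $\varepsilon_k\to\varepsilon$. Continuity of $\alpha$ and $\beta$ in $z,y$ gives pointwise a.e.\ convergence of the integrands. The growth bounds supply an integrable majorant of the form $a_\ell(C(|\nabla u|+|\nabla v|)^{p-1}+d_1)|\nabla w|$, plus the analogous $\beta$-term. Dominated convergence then finishes the argument. For measurability, $\alpha(x,t,\nabla u(x))$ is Carathéodory, hence jointly measurable in $(x,t)$. Fubini applied to the integrable integrand gives measurability of $t\mapsto\langle A_\ell(t)u,v\rangle$.
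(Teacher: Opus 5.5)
Your proposal is correct and follows essentially the same route as the paper's proof. Both use weighted H\"older plus the growth bound for item 1, pointwise integration of the structure conditions for items 2 and 4, and dominated convergence for hemicontinuity; for measurability you give a Carath\'eodory/Fubini argument where the paper simply cites Zeidler. One improvement over the paper: you decouple the joint monotonicity and coercivity conditions (via $y_1=y_2$, $z_1=z_2$, resp.\ $y=0$, $z=0$) before applying them to integrands with the different weights $a_\ell$ and $b_\ell$, a step the paper passes over, and this correctly yields the factor $(a_\ell+b_\ell)$ in front of $d_2$, which you absorb into $d_4$.
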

\begin{proof}
The first assertion is valid, as for all $u,v\in V_\ell$ and a.e.\ $t\in (0,T)$ one has that
\begin{equation*}
\begin{aligned}
&|\langle A_\ell(t)u, v\rangle_{V_\ell^*\times V_\ell}\\
&\leq
\Bigl(C(\int_{\Omega_\ell} a_\ell |\nabla u|^p\,\mathrm{d}x)^{\frac{p-1}{p}}+(\int_{\Omega_\ell} a_\ell d_1(t)^{\frac{p}{p-1}}\,\mathrm{d}x)^{\frac{p-1}{p}}\Bigr)(\int_{\Omega_\ell} a_\ell |\nabla v|^p\,\mathrm{d}x)^{\frac{1}{p}}\\
&\quad+\Bigl(C(\int_{\Omega_\ell} b_\ell |u|^p\,\mathrm{d}x)^{\frac{p-1}{p}}+(\int_{\Omega_\ell} b_\ell d_1(t)^{\frac{p}{p-1}}\,\mathrm{d}x)^{\frac{p-1}{p}}\Bigr)(\int_{\Omega_\ell} b_\ell |v|^p\,\mathrm{d}x)^{\frac{1}{p}}\\
&\leq C(\|u\|_{V_\ell}^{p-1}+\|d_1(t)\|_{L^{p/(p-1)}(\Omega)})\|v\|_{V_\ell}.
\end{aligned}
\end{equation*}
The second assertion holds, as
\begin{equation*}
\begin{aligned}
\langle A_\ell &(t)u-A_\ell(t)v, u-v\rangle_{V_\ell^*\times V_\ell}\\
&=\int_{\Omega_\ell} a_\ell \bigl(\alpha(t,\nabla u)-\alpha(t,\nabla v)\bigr)\cdot \nabla (u-v)+b_\ell \bigl(\beta(t,u)-\beta(t, v)\bigr)(u-v)\,\mathrm{d}x\\
&\geq c\bigl(\| \nabla (u-v)\|_{L^p(\Omega_\ell,a_\ell)^d}^{p}+\| u-v\|_{L^p(\Omega_\ell,b_\ell)}^{p}\bigr)=c\|u-v\|^p_{V_\ell}\quad\text{for all }u,v\in V_\ell.
\end{aligned}
\end{equation*}
To prove the third assertion, consider a sequence $\{\varepsilon_n\}\subset [0,1]$ with the limit $\epsilon$, elements $u,v,w\in V_\ell$, and the function
\begin{equation*}
h(\varepsilon, x,t) = a_\ell(x)\alpha\bigl(x,t,(\nabla u+\varepsilon \nabla v)(x)\bigl)\cdot \nabla w(x)+ b_\ell(x)\beta\bigl(x,t,(u+\varepsilon v)(x)\bigl)w(x).
\end{equation*}
By~\cref{ass:parabolic}, we have $h(\varepsilon_n, x,t)\to h(\varepsilon, x,t)$ for a.e.\ $(x,t)\in\Omega_\ell\times (0,T)$,  and
\begin{equation*}
\begin{aligned}
|h(\varepsilon, x,t)| &\leq Ca_\ell(x)\bigl((|\nabla u(x)|+|\nabla v(x)|)^{p-1}+c|d_1(x,t)|\bigr)|\nabla w(x)|\\
&\quad+ Cb_\ell(x)\bigl((|u(x)|+|v(x)|)^{p-1}+c|d_1(x,t)|\bigr)|w(x)|,
\end{aligned}
\end{equation*}
where the right-hand side is in $L^1(\Omega_\ell)$ for a.e.\ $t$. Hence, the dominated convergence theorem gives
\begin{equation*}
\lim_{n\to\infty}\langle A_\ell(t)(u+\varepsilon_n w),v\rangle_{V_\ell^*\times V_\ell}=\lim_{n\to\infty}\int_{\Omega_\ell} h(\varepsilon_n, x,t)\,\mathrm{dx}
=\langle A_\ell(t)(u+\varepsilon w),v\rangle_{V_\ell^*\times V_\ell},
\end{equation*}
which implies that $A_\ell(t)$ is hemicontinuous for a.e.\ $t$. The fourth assertion holds, as
\begin{align*}
	\langle A_\ell(t)u, u \rangle_{V_\ell^*\times V_\ell}
	&=\int_{\Omega_\ell} a_\ell\alpha(t,\nabla u)\cdot \nabla u +b_\ell \beta(t,u)u \,\mathrm{d}x\\
	&\geq \int_{\Omega_\ell} c (a_\ell |\nabla u|^p + b_\ell |u|^p) - d_2(x,t)\,\mathrm{d}x
	\geq c \| u\|_{V_{\ell} }^{p} - \| d_2(\cdot,t)\|_{L^1(\Omega_\ell)}
\end{align*}
for all $u \in V_\ell$. The final assertion holds, by the measurability of $\alpha,\beta$ and the argument in~\cite[Section~30.4]{Zeidler}. Repeating the same proof, with the weights $a_\ell,b_\ell$ replaced by~1, gives the same properties for $A(t) \colon V\to V^*$. \qed
\end{proof}
\cref{lem:Aell} together with~\cite[Section~30.3b]{Zeidler} gives that the induced operators $\mathcal{A_\ell}\colon \mathcal{V}_\ell\to\mathcal{V}_\ell^*, \ell=1,\ldots,q$, are all bounded, hemicontinuous, $k_\ell$-monotone, and coercive. Here, the function $k_\ell$ has the form
\begin{equation*}
k_\ell u= c \| u\|_{\mathcal{V}_{\ell} }^{p}.
\end{equation*}
The same properties also hold for $\mathcal{A}\colon\mathcal{V}\to\mathcal{V}^*$. That is, in the context of degenerate elliptic-parabolic equations, the problem sets $(V,H,M,\mathcal{A})$ and $(V_\ell,H_\ell,M_\ell,\mathcal{A}_\ell)$ are all proper. Hence, \cref{ass:parabolic} implies \cref{ass:abstract1}.

Validating~\cref{ass:abstract2} can be done as follows. Since $L^p(\Omega_{\ell}) \subseteq L^p(\Omega_{\ell}, w_{\ell})$ for every weight function $w_{\ell}$ that we have considered, we find that $W^{1,p}(\Omega_\ell)\subset V_\ell$. For $u\in V$, it therefore follows that $R_\ell $ is bounded from $V$ to $V_\ell$ as
\begin{equation*}
	\begin{aligned}
		\| R_\ell u\|^p_{V_{\ell}}
		&=\bigr\| \nabla R_\ell u\bigr\|^p_{L^p(\Omega_{\ell},a_\ell)^d} + \bigr\| R_\ell u \bigr\|^p_{L^p(\Omega_{\ell},  b_\ell)}
		\leq C \big(\bigr\| \nabla R_\ell u\bigr\|^p_{L^p(\Omega_{\ell})^d} + \bigr\| R_\ell u \bigr\|^p_{L^p(\Omega_{\ell})}\big)\\
		&= C \big(\bigr\| E_{\ell} \nabla R_\ell u\bigr\|^p_{L^p(\Omega)^d} + \bigr\| E_{\ell} R_\ell u \bigr\|^p_{L^p(\Omega)}\big)
		\leq C \|u\|^p_{V}.
	\end{aligned}
\end{equation*}
The second part of~\cref{ass:abstract2} follows from
\begin{align*}
	\|u\|_V^p 
	&=\bigr\| \nabla u\bigr\|^p_{L^p(\Omega)^d} + \bigr\| u \bigr\|^p_{L^p(\Omega)} \\
	&\leq \sum_{\ell=1}^{q} \bigr\| \nabla R_{\ell} u\bigr\|^p_{L^p(\Omega_{\ell}, a_{\ell})^d} + \sum_{\ell=1}^{q} \bigr\| R_{\ell} u \bigr\|^p_{L^p(\Omega_{\ell}, b_{\ell})} 
	= \sum_{\ell=1}^{q} \bigr\| R_{\ell} u\bigr\|^p_{V_{\ell}}.
\end{align*}
To prove that \cref{ass:parabolic,ass:rhs} implies \cref{ass:abstract3} first observe that
\begin{equation*}
\begin{aligned}
\sum_{\ell=1}^q &\langle \mathcal{A}_\ell\mathcal{R}_\ell u, \mathcal{R}_\ell v\rangle_{\mathcal{V}_\ell^*\times\mathcal{V}_\ell}\\
&=\sum_{\ell=1}^q\int_0^T\int_{\Omega_\ell} a_\ell\alpha\bigr(t,\nabla R_\ell u(t)\bigr)\cdot\nabla R_\ell v(t)+b_\ell\beta\bigr(t,R_\ell u(t)\bigr)R_\ell v(t)\,\mathrm{d}x\mathrm{d}t\\
&=\int_0^T\int_{\Omega} \sum_{\ell=1}^q (E_\ell a_\ell)\alpha\bigr(t,\nabla u(t)\bigr)\cdot\nabla v(t)+(E_\ell b_\ell)\beta\bigr(t,u(t)\bigr)v(t)\,\mathrm{d}x\mathrm{d}t\\
&=\langle \mathcal{A}u, v\rangle_{\mathcal{V}^*\times\mathcal{V}}
\end{aligned}
\end{equation*}
for every $u,v\in \mathcal{V}$. Hence, by ~\cref{cor:Well}, one has the equality
\begin{equation*}
\begin{aligned}
\bigl(\sum_{\ell=1}^q\mathcal{F}_\ell &u, v\bigr)_{\mathcal{H}}=
\sum_{\ell=1}^q\bigl((\mathrm{d}/\mathrm{d}t\mathcal{M}_\ell+\mathcal{A}_\ell)\mathcal{R}_\ell u+f_\ell, \mathcal{R}_\ell v\bigr)_{\mathcal{H}_\ell}\\
&=\sum_{\ell=1}^q \langle \mathrm{d}/\mathrm{d}t\mathcal{M}_\ell\mathcal{R}_\ell u, \mathcal{R}_\ell v\rangle_{\mathcal{V}_\ell^*\times\mathcal{V}_\ell}+
\langle\mathcal{A}_\ell\mathcal{R}_\ell u, \mathcal{R}_\ell v\rangle_{\mathcal{V}_\ell^*\times\mathcal{V}_\ell}+
\langle f_\ell, \mathcal{R}_\ell v\rangle_{\mathcal{V}_\ell^*\times\mathcal{V}_\ell}\\
&= \langle (\mathrm{d}/\mathrm{d}t\mathcal{M}+\mathcal{A})u+f, v\rangle_{\mathcal{V}^*\times\mathcal{V}}= \langle\mathcal{F}u, v\rangle_{\mathcal{V}^*\times\mathcal{V}}
\end{aligned}
\end{equation*}
for every $u\in \cap_{\ell=1}^q D(\mathcal{F}_\ell),v\in \mathcal{V}$. As $\mathcal{V}$ is dense in $\mathcal{H}$, we have $u\in D(\mathcal{F})$ and $\sum_{\ell=1}^q\mathcal{F}_\ell u=\mathcal{F}u$ in $\mathcal{H}$, i.e., \cref{ass:abstract3} holds.

With this setting~\cref{thm:convPR} translates into the convergence result below.
\begin{corollary}
Consider the Peaceman--Rachford~\cref{eq:PR} or Douglas--Rachford~\cref{eq:DR} approximation $\{u^n_1,u^n_2\}_{n\in\mathbb{N}}$ of the solution~$u$ to the degenerate elliptic-parabolic problem $\mathcal{F}u=0$. If~\cref{ass:parabolic,ass:regularity,ass:rhs} hold and $u_2^0\in D(\mathcal{F}_2)$ then
\begin{equation*}
\lim_{n\to\infty}\,\sum_{\ell=1}^2 c\|\nabla (u^n_\ell-u)\|_{L^p(0,T;\, L^p(\Omega_\ell,a_\ell)^d)}^p+c\|u^n_\ell-u\|_{L^p(0,T;\, L^p(\Omega_\ell,b_\ell))}^p=0
\end{equation*}
for every method parameter $s>0$.
\end{corollary}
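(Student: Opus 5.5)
The corollary is a direct instance of \cref{thm:convPR}. The plan is to check that \cref{ass:parabolic,ass:rhs} supply \cref{ass:abstract1,ass:abstract2,ass:abstract3} in the concrete setting $H=L^2(\Omega)$, $V=W^{1,p}(\Omega)$, $H_\ell=L^2(\Omega_\ell)$, with $V_\ell$ as above. Then \cref{ass:regularity} and $u_2^0\in D(\mathcal{F}_2)$ are hypotheses shared with the abstract theorem, and the remaining task is to identify the functions $k_\ell$ explicitly.

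First I would collect the verifications already carried out in \cref{sec:par}.
\begin{itemize}
\item \cref{lem:Vell} shows that $V_\ell$ is separable, reflexive and densely embedded in $H_\ell$.
\item The multiplication operators $M_\ell$ are linear, bounded, monotone and symmetric, because $g_\ell R_\ell\gamma\ge 0$ is bounded. Since $\sum_\ell\chi^\gamma_\ell=1$, they satisfy $\sum_\ell E_\ell M_\ell R_\ell=M$.
\item Zero extension and restriction satisfy $R_\ell E_\ell=I$ and the adjoint relation.
\item \cref{lem:Aell} together with \cite[Section~30.3b]{Zeidler} gives that $\mathcal{A}_\ell$ and $\mathcal{A}$ are bounded, hemicontinuous, coercive and $k_\ell$-monotone. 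Hence all problem sets are proper, which is \cref{ass:abstract1}.
\item \cref{ass:abstract2} follows from the two norm estimates shown above for $R_\ell$.
\item \cref{ass:abstract3} follows from the identity $\sum_\ell\langle\mathcal{A}_\ell\mathcal{R}_\ell u,\mathcal{R}_\ell v\rangle=\langle\mathcal{A}u,v\rangle$, from \cref{cor:Well}, from \cref{ass:rhs}, and from density of $\mathcal{V}$ in $\mathcal{H}$.
\end{itemize}

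Next, \cref{thm:convPR} gives $k_1(u_1^n-u)+k_2(u_2^n-u)\to0$. Here $k_\ell$ is the monotonicity function of the operator $\mathcal{F}_\ell$ on $\mathcal{H}$, namely $k_\ell(w)=c\|\mathcal{R}_\ell w\|^p_{\mathcal{V}_\ell}$. To see this, I would repeat the accretivity computation in the lemma on maximal accretivity of $\mathcal{F}_\ell$. That computation reduces $(\mathcal{F}_\ell u-\mathcal{F}_\ell v,u-v)_{\mathcal{H}}$ to the $\mathcal{H}_\ell$-pairing of the restricted operator. \cref{lem:accretive} handles the time-derivative part, and \cref{lem:Aell}(2), integrated in time, bounds the $\mathcal{A}_\ell$ part from below by
\begin{equation*}
c\int_0^T\|R_\ell(u-v)(t)\|^p_{V_\ell}\,\mathrm{d}t .
\end{equation*}
The pairing is well defined because $\mathcal{R}_\ell u,\mathcal{R}_\ell v\in\mathcal{W}_\ell\subset\mathcal{V}_\ell$ and the images lie in $\mathcal{H}_\ell$.

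Finally, I would expand the $\mathcal{V}_\ell$ norm. By definition,
\begin{equation*}
\|w\|^p_{\mathcal{V}_\ell}=\int_0^T\|\nabla w(t)\|^p_{L^p(\Omega_\ell,a_\ell)^d}+\|w(t)\|^p_{L^p(\Omega_\ell,b_\ell)}\,\mathrm{d}t ,
\end{equation*}
which is exactly the sum appearing in the statement, with $u^n_\ell-u$ understood through its restriction to $\Omega_\ell$.

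The main obstacle is the bookkeeping of the hypotheses. The abstract theorem requires $u\in D(\mathcal{F}_1)\cap D(\mathcal{F}_2)$, which is supplied by \cref{ass:regularity}. It also requires that $k_\ell$ in \cref{thm:convPR} is the monotonicity modulus of $\mathcal{F}_\ell$ rather than of $\mathcal{A}_\ell$. I would make the latter explicit by stating the strengthened accretivity inequality
\begin{equation*}
(\mathcal{F}_\ell u-\mathcal{F}_\ell v,u-v)_{\mathcal{H}}\ge c\|\mathcal{R}_\ell(u-v)\|^p_{\mathcal{V}_\ell},
\end{equation*}
and then inserting it into the last chain of inequalities in the proof of \cref{thm:convPR}.
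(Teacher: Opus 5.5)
Your proposal is correct and is essentially the paper's argument: you verify \cref{ass:abstract1,ass:abstract2,ass:abstract3} from \cref{ass:parabolic,ass:rhs} exactly as in \cref{sec:par}, and then apply \cref{thm:convPR} with $k_\ell w = c\|\mathcal{R}_\ell w\|^p_{\mathcal{V}_\ell}$, whose expansion gives the weighted norms in the statement. Your explicit lower bound $(\mathcal{F}_\ell u-\mathcal{F}_\ell v,u-v)_{\mathcal{H}}\ge c\|\mathcal{R}_\ell(u-v)\|^p_{\mathcal{V}_\ell}$, obtained from \cref{lem:accretive} and \cref{lem:Aell}, simply makes precise the step that the paper's proof of \cref{thm:convPR} uses implicitly.
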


For the additive splitting~\cref{eq:AS}, one needs a slight modification, as~\cref{ass:parabolic} does not imply the condition $k_\ell\geq c\|\cdot\|^2_{\mathcal{H}}$. To this end, we will consider the case with $\gamma(x)\geq \gamma_0>0$ for a.e.\ $x\in\Omega$, i.e., a degenerate parabolic equation. In this setting, the variable change $\hat{u}(t)=\mathrm{e}^{-qt}u(t)$ gives the ``shifted'' equation
\begin{equation*}
\hat{\mathcal{F}}\hat{u}=\mathrm{d}/\mathrm{d} t\mathcal{M} \hat{u}+\hat{\mathcal{A}}\hat{u}+\hat{f}=0\quad\text{in }\mathcal{V}^*,
\end{equation*}
with $\hat{f}(t)=\mathrm{e}^{-qt}f(t)$ and
\begin{equation*}
\langle \hat{\mathcal{A}}\hat{u},v\rangle_{\mathcal{V}^*\times \mathcal{V}}=
\int_0^T\int_{\Omega} \mathrm{e}^{-qt}\alpha(t,\mathrm{e}^{qt}\nabla \hat{u})\cdot \nabla v+\bigl(\mathrm{e}^{-qt}\beta(t,\mathrm{e}^{qt}\hat{u})+q\gamma \hat{u}\bigr)v\,\mathrm{d}x\mathrm{d}t.
\end{equation*}
Note that~\cref{ass:parabolic} still holds for the operator above. This implies that $\hat{\mathcal{F}}\hat{u}=0$ has a unique solution. Furthermore, consider the decompositions, with $u,v\in \mathcal{V}_\ell$,
\begin{equation*}
\langle \hat{\mathcal{A}}_\ell u,v\rangle_{\mathcal{V}_\ell^*\times \mathcal{V}_\ell}=
\int_0^T\int_{\Omega_\ell} a_\ell\mathrm{e}^{-tq}\alpha(t,\mathrm{e}^{tq}\nabla u)\cdot \nabla v+\bigl(b_\ell\mathrm{e}^{-tq}\beta(t,\mathrm{e}^{tq}u)+\gamma u\bigr) v\,\mathrm{d}x\mathrm{d}t.
\end{equation*}
The same proof as for~\cref{lem:Aell} gives that $(V_\ell,H_\ell,M_\ell,\hat{\mathcal{A}}_\ell)$ is proper. Here, $\hat{\mathcal{A}}_\ell$ is $\hat{k}_\ell$-monotone with
\begin{equation*}
\hat{k}_\ell u=c\|\nabla u\|_{\mathcal{V}_\ell}^p+\gamma_0\|u\|^2_{\mathcal{H}}.
\end{equation*}
Applying~\cref{eq:AS} to this new decomposition will be referred to as the \emph{shifted additive splitting}. \cref{thm:convAS} then gives the following convergence result.
\begin{corollary}
Consider the shifted additive splitting approximation $\{\hat{u}^n\}_{n=1}^N$ of the solution~$u$ to the degenerate parabolic problem~$\mathcal{F}u=0$. If~\cref{ass:parabolic,ass:regularity,ass:rhs} hold and $\gamma(x)\geq \gamma_0>0$ for a.e.\ $x\in\Omega$, then
\begin{equation*}
\lim_{N\to\infty}\|\mathrm{e}^{qt}\hat{u}^N-u\|_{\mathcal{H}}=0
\end{equation*}
for the parameter choice $s=C\sqrt{N}$.
\end{corollary}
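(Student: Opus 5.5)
The plan is to reduce the statement to \cref{thm:convAS}, applied to the shifted problem $\hat{\mathcal{F}}\hat{u}=0$ with the decomposition $\hat{\mathcal{F}}_\ell$, which is built from $M_\ell$, $\hat{\mathcal{A}}_\ell$ and $\hat f_\ell(t)=\mathrm{e}^{-qt}f_\ell(t)$. The result is then transferred back to $u$ through the bounded multiplication $\hat u\mapsto \mathrm{e}^{qt}\hat u$.

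\textbf{Step 1: the shifted problem is equivalent to the original one.} I first check that $u$ solves $\mathcal{F}u=0$ if and only if $\hat u=\mathrm{e}^{-qt}u$ solves $\hat{\mathcal F}\hat u=0$. Multiplication by $\mathrm{e}^{\pm qt}$ maps $\mathcal V$, $\mathcal H$ and $D(\mathrm{d}/\mathrm{d}t)$ into themselves, and the product rule gives
\[
\mathrm{d}/\mathrm{d}t\,\mathcal M(\mathrm{e}^{qt}\hat u)=\mathrm{e}^{qt}\bigl(\mathrm{d}/\mathrm{d}t\,\mathcal M\hat u+q\mathcal M\hat u\bigr).
\]
Since $\mathrm{e}^{-qt}\mathcal F(\mathrm{e}^{qt}\hat u)=\hat{\mathcal F}\hat u$, the two problems are equivalent. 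In the same way, \cref{ass:regularity} for $u$ transfers to $\hat u\in\cap_\ell D(\hat{\mathcal F}_\ell)$.

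\textbf{Step 2: the hypotheses of \cref{thm:convAS} hold for the shifted decomposition.} \cref{ass:abstract1} follows because each $(V_\ell,H_\ell,M_\ell,\hat{\mathcal A}_\ell)$ is proper, as stated just before the corollary, and because the shifted global problem set is proper as well. \cref{ass:abstract2} does not depend on the operators, so it holds as verified earlier. \cref{ass:abstract3} follows by repeating the earlier computation with the extra weights $\mathrm{e}^{\pm qt}$.

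The one term that needs care is the shift. In the global operator it is $q\gamma\hat u$, but in $\hat{\mathcal A}_\ell$ it appears as $\gamma u$ rather than as $q\,g_\ell\gamma u$. I would read it as $q\,(R_\ell\gamma)\,g_\ell$, or as a suitable partition of unity, so that the pieces sum to $q\gamma\hat u$. With that reading, the sum over $\ell$ of the restricted shift terms reproduces the global one.

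\textbf{Step 3: the coercivity condition $\hat k_\ell\ge c\|\cdot\|^2_{\mathcal H}$.} The shift term contributes $\int g_\ell\gamma|u|^2\ge\gamma_0\int g_\ell|u|^2$. This is a lower bound of the form $c\|\cdot\|^2_{\mathcal H_\ell}$ only if $g_\ell$ is bounded below on $\Omega_\ell$, or if the shift term is taken without the weight $g_\ell$, as the displayed formula for $\hat k_\ell$ suggests.

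The required inequality for \cref{thm:convAS} is in $\mathcal H$. The proof of that theorem only uses $k_\ell(v_\ell^{n+1})$ with $v_\ell^{n+1}=u_\ell^{n+1}-u$, whose $\mathcal H$-norm also includes the part outside $\Omega_\ell$. I expect this to be the \textbf{main obstacle}: the resolvent of $\hat{\mathcal F}_\ell$ leaves the component outside $\Omega_\ell$ unchanged, since there it acts as $s^{-1}(I-\mathcal E_\ell\mathcal R_\ell)g$, so no decay is produced there from $k_\ell$.

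I would try first to add the pieces over $\ell$, using the identity
\[
\sum_\ell \mathcal E_\ell\,\text{shift}_\ell\,\mathcal R_\ell=q\gamma I,
\]
together with convexity in the averaging step, to obtain a global bound of the form $\frac1q\sum_\ell\hat k_\ell(v_\ell^{n+1})\ge c\,\frac1q\sum_\ell\|v_\ell^{n+1}\|^2_{\mathcal H}$. If that fails, I would accept the paper's formal reading that $\hat k_\ell\ge c\|\cdot\|_{\mathcal H}^2$, with $c=\gamma_0$ times the lower bound of the weights, and apply \cref{thm:convAS} directly.

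\textbf{Step 4: conclusion.} \cref{thm:convAS} gives $\|\hat u^N-\hat u\|_{\mathcal H}\to0$ for $s=C\sqrt N$. Since $\|\mathrm{e}^{qt}w\|_{\mathcal H}\le \mathrm{e}^{qT}\|w\|_{\mathcal H}$ and $u=\mathrm{e}^{qt}\hat u$, it follows that $\|\mathrm{e}^{qt}\hat u^N-u\|_{\mathcal H}\to0$.
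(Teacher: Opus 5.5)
Your route is the paper's. The paper only notes that $(V_\ell,H_\ell,M_\ell,\hat{\mathcal A}_\ell)$ is proper with $\hat k_\ell u=c\|\nabla u\|^p_{\mathcal V_\ell}+\gamma_0\|u\|^2_{\mathcal H}$, invokes \cref{thm:convAS}, and undoes the substitution. Steps 1, 2 and 4 are fine. Your reading of the shift term in piece $\ell$ as $q\,g_\ell(R_\ell\gamma)$, which is what conjugating $\mathcal F_\ell$ by $\mathrm{e}^{qt}$ produces, is the right one. With the literal $\gamma u$ one gets $\sum_\ell\mathcal E_\ell\gamma\mathcal R_\ell\neq q\gamma I$ unless every $\Omega_\ell=\Omega$. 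Then \cref{ass:abstract3} fails, and so does $\sum_\ell\hat{\mathcal F}_\ell\hat u=0$, which the proof of \cref{thm:convAS} uses.

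The gap is Step 3. Your diagnosis there is correct, but neither of your two ways out works. Your fallback is exactly what the paper does, but it cannot be literally true. Indeed $(\hat{\mathcal F}_\ell w-\hat{\mathcal F}_\ell z,w-z)_{\mathcal H}=0$ whenever $w-z$ vanishes on $\Omega_\ell$, so monotonicity only yields $\hat k_\ell(\mathcal R_\ell v_\ell^{n+1})$, and \cref{thm:convAS} cannot be applied verbatim. Your first attempt fails for the same reason. The left side of $\frac1q\sum_\ell\hat k_\ell(\cdot)\ge\frac cq\sum_\ell\|v_\ell^{n+1}\|^2_{\mathcal H}$ only sees $\mathcal R_\ell v_\ell^{n+1}$. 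The right side contains $\frac cq\sum_\ell\|(I-P_\ell)v^n\|^2_{\mathcal H}$, with $P_\ell=\mathcal E_\ell\mathcal R_\ell$, about which the subproblems carry no information.

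The missing idea is to use local coercivity only on $a_\ell=P_\ell v_\ell^{n+1}$, and to recover global decay in the averaging step from the fact that every point lies in at least one subdomain.

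First make the local bound available by choosing $\chi^\gamma_\ell$ bounded below on $\Omega_\ell$, e.g. $\chi^\gamma_\ell=1/m(x)$ on $\Omega_\ell$, where $m(x)\ge1$ is the number of subdomains containing $x$. Then $q\,g_\ell R_\ell\gamma\ge\gamma_0$, so $\hat k_\ell(\mathcal R_\ell v_\ell^{n+1})\ge c\|a_\ell\|^2_{\mathcal H}$ with $c=\gamma_0$.

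Since $v_\ell^{n+1}=a_\ell+(I-P_\ell)v^n$ orthogonally, the computation in the proof of \cref{thm:convAS} (with the same Young bound on the right) gives
\[
\frac1q\sum_{\ell=1}^q\Bigl(s(s+2c)\|a_\ell\|^2_{\mathcal H}+s^2\|(I-P_\ell)v^n\|^2_{\mathcal H}\Bigr)-s^2\|v^n\|^2_{\mathcal H}\le\frac1q\sum_{\ell=1}^q\|\hat{\mathcal F}_\ell\hat u\|^2_{\mathcal H}.
\]
At a.e. $(x,t)$ one has $v^{n+1}=\frac1q\sum_{\ell:\,x\in\Omega_\ell}a_\ell+\frac{q-m(x)}{q}v^n$. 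A weighted Cauchy--Schwarz inequality, together with $m(x)\ge1$, then yields
\[
s(s+2c')|v^{n+1}|^2\le\frac1q\sum_{\ell:\,x\in\Omega_\ell}s(s+2c)|a_\ell|^2+\frac{q-m(x)}{q}\,s^2|v^n|^2,\qquad c'=\frac{cs}{qs+2(q-1)c}.
\]
Integrate this, using $\sum_\ell\|(I-P_\ell)v^n\|^2_{\mathcal H}=\int_0^T\int_\Omega(q-m(x))|v^n|^2\,\mathrm{d}x\,\mathrm{d}t$. The result is
\[
s(s+2c')\|v^{n+1}\|^2_{\mathcal H}-s^2\|v^n\|^2_{\mathcal H}\le\frac1q\sum_\ell\|\hat{\mathcal F}_\ell\hat u\|^2_{\mathcal H}.
\]
This is the recursion of \cref{thm:convAS} with $c$ replaced by $c'\ge c/(q+2(q-1)c)$ for $s\ge1$. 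The remainder of that proof, with $s=C\sqrt N$, and your Step 4 then go through unchanged.
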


\bibliographystyle{plain}
\bibliography{references}

@article {Bardos,
    AUTHOR = {Bardos, Claude and Brezis, Ha\"im},
     TITLE = {Sur une classe de probl\`emes d'\'evolution non lin\'eaires},
   JOURNAL = {J. Differential Equations},
  FJOURNAL = {Journal of Differential Equations},
    VOLUME = {6},
      YEAR = {1969},
     PAGES = {345--394},
      ISSN = {0022-0396,1090-2732},
   MRCLASS = {47.65 (35.00)},
  MRNUMBER = {242020},
MRREVIEWER = {J.\ A.\ Goldstein},
       DOI = {10.1016/0022-0396(69)90023-0},
       URL = {https://doi.org/10.1016/0022-0396(69)90023-0},
}

@book {Show,
    AUTHOR = {Showalter, Ralph E.\ },
     TITLE = {Monotone operators in {B}anach space and nonlinear partial
              differential equations},
    SERIES = {Mathematical Surveys and Monographs},
    VOLUME = {49},
 PUBLISHER = {American Mathematical Society, Providence, RI},
      YEAR = {1997},
     PAGES = {xiv+278},
      ISBN = {0-8218-0500-2},
   MRCLASS = {47H15 (34G20 35J60 35K55 47H05 47N20)},
  MRNUMBER = {1422252},
MRREVIEWER = {Ioan\ I.\ Vrabie},
       DOI = {10.1090/surv/049},
       URL = {https://doi.org/10.1090/surv/049},
}

@book {Adams,
    AUTHOR = {Adams, Robert A. and Fournier, John J. F.},
     TITLE = {Sobolev spaces},
    SERIES = {Pure and Applied Mathematics (Amsterdam)},
    VOLUME = {140},
   EDITION = {Second},
 PUBLISHER = {Elsevier/Academic Press, Amsterdam},
      YEAR = {2003},
     PAGES = {xiv+305},
      ISBN = {0-12-044143-8},
   MRCLASS = {46E35 (46-01 46-02 46B70 46Exx)},
  MRNUMBER = {2424078},
}

@article {lionsmercier,
    AUTHOR = {Lions, Pierre-Louis and Mercier, Bertrand},
     TITLE = {Splitting algorithms for the sum of two nonlinear operators},
   JOURNAL = {SIAM J. Numer. Anal.},
  FJOURNAL = {SIAM Journal on Numerical Analysis},
    VOLUME = {16},
      YEAR = {1979},
    NUMBER = {6},
     PAGES = {964--979},
      ISSN = {0036-1429},
   MRCLASS = {47H17 (35K55 35R35 49D25 65J15)},
  MRNUMBER = {551319},
MRREVIEWER = {D. Pascali},
       DOI = {10.1137/0716071},
       URL = {https://doi.org/10.1137/0716071},
}

@book {roubicek,
    AUTHOR = {Roub\'{\i}\v{c}ek, Tom\'{a}\v{s}},
     TITLE = {Nonlinear partial differential equations with applications},
    SERIES = {International Series of Numerical Mathematics},
    VOLUME = {153},
   EDITION = {Second},
 PUBLISHER = {Birkh\"{a}user/Springer Basel AG, Basel},
      YEAR = {2013},
     PAGES = {xx+476},
      ISBN = {978-3-0348-0512-4; 978-3-0348-0513-1},
   MRCLASS = {35-02 (35J60 35K55 35Q30)},
  MRNUMBER = {3014456},
       DOI = {10.1007/978-3-0348-0513-1},
       URL = {https://doi.org/10.1007/978-3-0348-0513-1},
}

@article {Eisenmann,
    AUTHOR = {Eisenmann, Monika and Hansen, Eskil},
     TITLE = {Convergence analysis of domain decomposition based time
              integrators for degenerate parabolic equations},
   JOURNAL = {Numer. Math.},
  FJOURNAL = {Numerische Mathematik},
    VOLUME = {140},
      YEAR = {2018},
    NUMBER = {4},
     PAGES = {913--938},
      ISSN = {0029-599X,0945-3245},
   MRCLASS = {65M55 (35K65 65J08 65M12)},
  MRNUMBER = {3864705},
MRREVIEWER = {Barry\ Lee},
       DOI = {10.1007/s00211-018-0985-z},
       URL = {https://doi.org/10.1007/s00211-018-0985-z},
}

@book {Kufner,
    AUTHOR = {Kufner, Alois and John, Old\v{r}ich and Fu\v{c}\'{\i}k, Svatopluk},
     TITLE = {Function spaces},
     PUBLISHER = {Noordhoff International Publishing, Leyden; Academia, Prague},
      YEAR = {1977},
     PAGES = {xv+454},
      ISBN = {90-286-0015-9},
   MRCLASS = {46EXX},
  MRNUMBER = {0482102},
MRREVIEWER = {Richard Bagby},
}

@book {zeidler,
    AUTHOR = {Zeidler, Eberhard},
     TITLE = {Nonlinear functional analysis and its applications. {II}/{B}},
 PUBLISHER = {Springer-Verlag, New York},
      YEAR = {1990},
     PAGES = {i--xvi and 469--1202},
      ISBN = {0-387-97167-X},
   MRCLASS = {47-02 (35-01 35J60 47Hxx 58-01 65Jxx)},
  MRNUMBER = {1033498},
MRREVIEWER = {Jean Mawhin},
       DOI = {10.1007/978-1-4612-0985-0},
       URL = {https://doi.org/10.1007/978-1-4612-0985-0},
}

@article {Temam,
    AUTHOR = {Temam, Roger},
     TITLE = {Remarks on the approximation of some nonlinear elliptic
              equations},
   JOURNAL = {J. Comput. System Sci.},
  FJOURNAL = {Journal of Computer and System Sciences},
    VOLUME = {4},
      YEAR = {1970},
     PAGES = {250--259},
      ISSN = {0022-0000},
   MRCLASS = {65.66},
  MRNUMBER = {272211},
MRREVIEWER = {R.\ S.\ Varga},
       DOI = {10.1016/S0022-0000(70)80023-X},
       URL = {https://doi.org/10.1016/S0022-0000(70)80023-X},
}

@book {quarteroni,
    AUTHOR = {Quarteroni, Alfio and Valli, Alberto},
     TITLE = {Domain decomposition methods for partial differential
              equations},
    SERIES = {Numerical Mathematics and Scientific Computation},
      NOTE = {Oxford Science Publications},
 PUBLISHER = {The Clarendon Press, Oxford University Press, New York},
      YEAR = {1999},
     PAGES = {xvi+360},
      ISBN = {0-19-850178-1},
   MRCLASS = {65-02 (35J25 35Q30 35Q35 65M55 65N55)},
  MRNUMBER = {1857663},
MRREVIEWER = {Weimin Han},
}

@book {Widlund,
    AUTHOR = {Toselli, Andrea and Widlund, Olof},
     TITLE = {Domain decomposition methods---algorithms and theory},
    SERIES = {Springer Series in Computational Mathematics},
    VOLUME = {34},
 PUBLISHER = {Springer-Verlag, Berlin},
      YEAR = {2005},
     PAGES = {xvi+450},
      ISBN = {3-540-20696-5},
   MRCLASS = {65-02 (65N55 74S05 76M10)},
  MRNUMBER = {2104179},
MRREVIEWER = {R\'{e}mi Vaillancourt},
       DOI = {10.1007/b137868},
       URL = {https://doi-org.ludwig.lub.lu.se/10.1007/b137868},
}

@incollection {vabi,
    AUTHOR = {Vabishchevich, Petr and Zakharov, Petr},
     TITLE = {Domain decomposition scheme for first-order evolution
              equations with nonselfadjoint operators},
 BOOKTITLE = {Numerical solution of partial differential equations: theory,
              algorithms, and their applications},
    SERIES = {Springer Proc. Math. Stat.},
    VOLUME = {45},
     PAGES = {279--302},
 PUBLISHER = {Springer, New York},
      YEAR = {2013},
      ISBN = {978-1-4614-7172-1; 978-1-4614-7171-4},
   MRCLASS = {65M06 (65M55)},
  MRNUMBER = {3107037},
       DOI = {10.1007/978-1-4614-7172-1\_14},
       URL = {https://doi.org/10.1007/978-1-4614-7172-1_14},
}

@article {henningsson,
    AUTHOR = {Hansen, Eskil and Henningsson, Erik},
     TITLE = {Additive domain decomposition operator
              splittings---convergence analyses in a dissipative framework},
   JOURNAL = {IMA J. Numer. Anal.},
  FJOURNAL = {IMA Journal of Numerical Analysis},
    VOLUME = {37},
      YEAR = {2017},
    NUMBER = {3},
     PAGES = {1496--1519},
      ISSN = {0272-4979,1464-3642},
   MRCLASS = {65M55},
  MRNUMBER = {3671503},
       DOI = {10.1093/imanum/drw043},
       URL = {https://doi.org/10.1093/imanum/drw043},
}

@article {gander25,
    AUTHOR = {Gander, Martin J. and Wu, Shu-Lin and Zhou, Tao},
     TITLE = {Time parallelization for hyperbolic and parabolic problems},
   JOURNAL = {Acta Numer.},
  FJOURNAL = {Acta Numerica},
    VOLUME = {34},
      YEAR = {2025},
     PAGES = {385--489},
      ISSN = {0962-4929,1474-0508},
   MRCLASS = {65M55 (65M06 65M12 65M15 65Y05)},
  MRNUMBER = {4926314},
       DOI = {10.1017/S0962492924000072},
       URL = {https://doi.org/10.1017/S0962492924000072},
}

@incollection {steinbach19,
    AUTHOR = {Steinbach, Olaf and Yang, Huidong},
     TITLE = {Space-time finite element methods for parabolic evolution
              equations: discretization, a posteriori error estimation,
              adaptivity and solution},
 BOOKTITLE = {Space-time methods---applications to partial differential
              equations},
    SERIES = {Radon Ser. Comput. Appl. Math.},
    VOLUME = {25},
     PAGES = {207--248},
 PUBLISHER = {De Gruyter, Berlin},
      YEAR = {2019},
       DOI = {10.1515/9783110548488-007},
       URL = {https://doi.org/10.1515/9783110548488-007},
}

@article {japhet20,
    AUTHOR = {Ahmed, Elyes and Japhet, Caroline and Kern, Michel},
     TITLE = {Space-time domain decomposition for two-phase flow between
              different rock types},
   JOURNAL = {Comput. Methods Appl. Mech. Engrg.},
  FJOURNAL = {Computer Methods in Applied Mechanics and Engineering},
    VOLUME = {371},
      YEAR = {2020},
     PAGES = {113294, 30},
       DOI = {10.1016/j.cma.2020.113294},
       URL = {https://doi.org/10.1016/j.cma.2020.113294},
}

@article {gander07,
    AUTHOR = {Gander, Martin J. and Halpern, Laurence},
     TITLE = {Optimized {S}chwarz waveform relaxation methods for advection
              reaction diffusion problems},
   JOURNAL = {SIAM J. Numer. Anal.},
  FJOURNAL = {SIAM Journal on Numerical Analysis},
    VOLUME = {45},
      YEAR = {2007},
    NUMBER = {2},
     PAGES = {666--697},
       DOI = {10.1137/050642137},
       URL = {https://doi.org/10.1137/050642137},
}

@article {kwok21,
    AUTHOR = {Gander, Martin J. and Kwok, Felix and Mandal, Bankim C.},
     TITLE = {Dirichlet-{N}eumann waveform relaxation methods for parabolic
              and hyperbolic problems in multiple subdomains},
   JOURNAL = {BIT},
  FJOURNAL = {BIT. Numerical Mathematics},
    VOLUME = {61},
      YEAR = {2021},
    NUMBER = {1},
     PAGES = {173--207},
       DOI = {10.1007/s10543-020-00823-2},
       URL = {https://doi.org/10.1007/s10543-020-00823-2},
}

@article {halp12,
    AUTHOR = {Halpern, Laurence and Japhet, Caroline and Szeftel, J\'{e}r\'{e}mie},
     TITLE = {Optimized {S}chwarz waveform relaxation and discontinuous
              {G}alerkin time stepping for heterogeneous problems},
   JOURNAL = {SIAM J. Numer. Anal.},
  FJOURNAL = {SIAM Journal on Numerical Analysis},
    VOLUME = {50},
      YEAR = {2012},
    NUMBER = {5},
     PAGES = {2588--2611},
      ISSN = {0036-1429},
   MRCLASS = {65M60 (65M55)},
  MRNUMBER = {3022233},
MRREVIEWER = {Istv\'{a}n Farag\'{o}},
       DOI = {10.1137/120865033},
       URL = {https://doi-org.ludwig.lub.lu.se/10.1137/120865033},
}

@article {tai02,
    AUTHOR = {Tai, Xue-Cheng and Xu, Jinchao},
     TITLE = {Global and uniform convergence of subspace correction methods
              for some convex optimization problems},
   JOURNAL = {Math. Comp.},
  FJOURNAL = {Mathematics of Computation},
    VOLUME = {71},
      YEAR = {2002},
    NUMBER = {237},
     PAGES = {105--124},
      ISSN = {0025-5718},
   MRCLASS = {65K10 (65J15 65N22 65N55)},
  MRNUMBER = {1862990},
       DOI = {10.1090/S0025-5718-01-01311-4},
       URL = {https://doi.org/10.1090/S0025-5718-01-01311-4},
}

@article {engstrom22,
    AUTHOR = {Engstr\"om, Emil and Hansen, Eskil},
     TITLE = {Convergence analysis of the nonoverlapping {R}obin-{R}obin
              method for nonlinear elliptic equations},
   JOURNAL = {SIAM J. Numer. Anal.},
  FJOURNAL = {SIAM Journal on Numerical Analysis},
    VOLUME = {60},
      YEAR = {2022},
    NUMBER = {2},
     PAGES = {585--605},
      ISSN = {0036-1429,1095-7170},
   MRCLASS = {65N55 (35J70 47N20 65N12 65N30)},
  MRNUMBER = {4396361},
       DOI = {10.1137/21M1414942},
       URL = {https://doi.org/10.1137/21M1414942},
}

@article {engstrom24,
    AUTHOR = {Engstr\"om, Emil and Hansen, Eskil},
     TITLE = {Linearly convergent nonoverlapping domain decomposition
              methods for quasilinear parabolic equations},
   JOURNAL = {BIT},
  FJOURNAL = {BIT. Numerical Mathematics},
    VOLUME = {64},
      YEAR = {2024},
    NUMBER = {4},
     PAGES = {Paper No. 37, 37},
      ISSN = {0006-3835,1572-9125},
   MRCLASS = {65M55 (35K20 35K59 65J08)},
  MRNUMBER = {4800692},
MRREVIEWER = {R\"udiger\ Verf\"urth},
       DOI = {10.1007/s10543-024-01038-5},
       URL = {https://doi.org/10.1007/s10543-024-01038-5},
}

\end{document}